\newtheorem{Corollary}{Corollary}
\newtheorem{Proposition}{Proposition}
\newtheorem{Lemma}{Lemma}
\newtheorem{Theorem}{Theorem}
\renewcommand{\AA}{\mathcal{A}}
\newcommand{\BB}{\mathcal{B}}
\newcommand{\E}{{\mathbb{E}}}
\newcommand{\PP}{{\mathbb{P}}}
\newcommand{\FF}{{\mathcal{F}}}
\newcommand{\Var}{\mathrm{Var}}
\newcommand{\1}{\mathbbm{1}}
\DeclareMathOperator{\esssup}{ess\,sup}
\DeclareMathOperator{\essinf}{ess\,inf}
\begin{document}


\title[On-Line Unimodal Selection]%
{Optimal Sequential Selection of a Unimodal Subsequence of a Random Sequence}
\author[Arlotto, A., and Steele, J. M.]{Alessandro Arlotto and J. Michael Steele}

\thanks{A. Arlotto:  Wharton School, Department of Operations and Information Management, Huntsman Hall
527.2, University of Pennsylvania, Philadelphia, PA 19104}

\thanks{J. M.
Steele:  Wharton School, Department of Statistics, Huntsman Hall
447, University of Pennsylvania, Philadelphia, PA 19104}

\begin{abstract} We consider the problem of selecting sequentially a \emph{unimodal subsequence}
from a sequence of independent identically distributed random variables, and we find that a person doing optimal
sequential selection does within a factor of the square root of two as well as a
prophet who knows all of the random observations in advance of any selections.
Our analysis applies in fact to selections of subsequences that have $d+1$ monotone blocks, and, by including the case
$d=0$, our analysis also covers monotone subsequences.

{\sc Mathematics Subject
Classification (2000)}: Primary: 60C05, 90C40; Secondary: 60G42, 90C27, 90C39
\end{abstract}
\maketitle

\section{Introduction}

A classic result of  \citeasnoun{ErdSze:CM1935} tells us that in any
sequence $x_1, x_2, \ldots , x_n$ of $n$ real numbers there is a subsequence of
length $k=\lceil n^{1/2} \rceil$ that is either monotone increasing or monotone decreasing.
More precisely, given $x_1, x_2, \ldots , x_n$ one can always find a subsequence $1\leq n_1 < n_2 < \cdots < n_k \leq n$ for which we either have
$$
x_{n_1} \leq x_{n_2} \leq \cdots \leq x_{n_k}, \quad \text{or}
\quad
x_{n_1} \geq x_{n_2} \geq \cdots \geq x_{n_k}.
$$
Many years later, Fan \citeasnoun{Chu:JCT1980} considered the analogous problem for unimodal sequences. Specifically, she sought to determine
the maximum value $\ell_n$ such that in any sequence of $n$ real values $x_1, x_2,
\ldots , x_n$ one can find a subsequence $x_{i_1}, x_{i_2}, \ldots, x_{i_k}$ of length $k=\ell_n$ and a
``turning place" $1\leq t \leq k$ for which one either has
$$
x_{i_1}\leq x_{i_2}\leq \cdots \leq  x_{i_t} \geq x_{i_{t+1}}\geq \cdots \geq x_{i_k}, \quad \text{or}
$$
$$
x_{i_1}\geq x_{i_2}\geq \cdots \geq  x_{i_t} \leq x_{i_{t+1}}\leq \cdots \leq x_{i_k}.
$$
Through a sustained and instructive analysis,
she surprisingly obtained an exact formula:
$$
\ell_n = \left\lceil \left(3n -3/4\right)^{1/2}-1/2\right\rceil.
$$
Shortly afterwards, \citeasnoun{Ste:DM1981} considered unimodal subsequences of permutations, or equivalently,
unimodal subsequences of a sequence of $n$ independent, uniformly distributed random variables
$X_1, X_2, \ldots,X_n$. For the random variables
\begin{align*}
U_n = \max\{k:\; & X_{i_1}\leq X_{i_2}\leq\cdots \leq  X_{i_t} \geq X_{i_{t+1}}\geq \cdots \geq X_{i_k}, \hbox{ where } \\
                 & 1\leq i_1<i_2<\cdots<i_k\leq n\},
\end{align*}
and
\begin{align*}
D_n = \max\{k:\; & X_{i_1}\geq X_{i_2}\geq\cdots \geq  X_{i_t} \leq X_{i_{t+1}}\leq \cdots \leq X_{i_k}, \hbox{ where } \\
                 & 1\leq i_1<i_2<\cdots<i_k\leq n\},
\end{align*}
it was established  that
\begin{equation}\label{ProphetExpectedValues}
\E\left[\max \{U_n,\, D_n \}\right] \sim \E[U_n] \sim \E[D_n] \sim 2 (2n)^{1/2} \quad \text{as } n \rightarrow \infty.
\end{equation}

Here we consider analogs of the random variables $U_n$, $D_n$ and $L_n=\max\{U_n,D_n\}$ but
instead of seeing the whole sequence all at once, one observes the variables sequentially. Thus,
for each $1 \leq i \leq n$, the chooser must decide at time $i$ when $X_i$ is
first presented whether to accept or reject $X_i$ as an element of the unimodal subsequence.
The sequential (or on-line) selection for the much simpler problem of a monotone subsequence
--- the analog of the original \citeasnoun{ErdSze:CM1935} problem ---
was considered long ago in \citeasnoun{SamSte:AP1981}.

\subsection*{Main Results}

We denote by $\Pi(n)$ the set of all feasible policies for the unimodal sequential
selection problem for $\{X_1,X_2, \ldots, X_n\}$ where these random variables are independent
with a common continuous distribution function $F$.
Given any feasible sequential selection policy $\pi_n \in \Pi(n)$, if we let
$\tau_k$ denote the index of the $k$'th selected element, then for each $k$
the value $\tau_k$ is a stopping time with respect to the increasing sequence of
$\sigma$-fields $\FF_i=\sigma \{X_1,X_2, ..., X_i\}$, $1\leq i \leq n$.
In terms of these stopping times, the random variable
\begin{align*}
U^o_n(\pi_n)=\max  \{ k:\; & X_{\tau_1} \leq  X_{\tau_2} \leq \cdots \leq X_{\tau_t}\geq X_{\tau_{t+1}} \geq \cdots \geq  X_{\tau_k}, \text{ where}\\
                       & 1\leq \tau_1 < \tau_2 < \cdots < \tau_k \leq n \},
\end{align*}
is the length of the unimodal subsequence that is selected by the policy $\pi_n$. For the moment,
we just consider unimodal subsequences that begin with an increasing piece and end with a decreasing piece;
either of these pieces is permitted to have size one.

For each $n$ there is a policy $\pi_n^* \in \Pi(n)$
that maximizes the expected length of the selected subsequence, and the main issue is to determine
the asymptotic behavior of this expected value. The answer turns out to have an informative
relationship to the off-line selection problem. A prophet with knowledge of the whole sequence before making his choices
will do better than an optimal on-line chooser, but he will only do better by a factor of $\sqrt{2}$.

\begin{Theorem}[Expected Length of Optimal Unimodal Subsequences]\label{th:main}
For each $n\geq 1$, there is a $\pi^*_n\in \Pi(n)$, such that
\begin{equation*}
\E[U^o_n(\pi^*_n)] = \sup_{\pi_n\in \Pi(n)}\E[U^o_n(\pi_n)],
\end{equation*}
and for such an optimal policy one has the upper bound
\begin{equation*}
\E[U^o_n(\pi^*_n)] < 2 n^{1/2}
\end{equation*}
and the lower bound
\begin{equation*}
2 n^{1/2} - 4 (\pi / 6)^{1/2} n^{1/4} - O(1) < \E[U^o_n(\pi^*_n)]
\end{equation*}
which combine to give the asymptotic formula
\begin{equation*}
   \E[U^o_n(\pi^*_n)]  \sim 2 n^{1/2} \quad \text{as $n\rightarrow \infty$}.
\end{equation*}
\end{Theorem}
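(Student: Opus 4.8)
\emph{Plan of proof.} Existence of $\pi^*_n$ is routine dynamic programming: for a chooser who has already made some selections, the only payoff‑relevant information at time $i$ is the pair $(Y_i,b_i)$, where $Y_i$ is the value of the last selected element and $b_i\in\{0,1\}$ records whether the decreasing block has been entered. Since there are only finitely many decision epochs, the value functions are well defined by a finite backward recursion and $\pi^*_n$ is the policy that acts greedily with respect to them. By scale invariance we may assume throughout that $F$ is the uniform distribution on $(0,1)$, and the pre‑selection state is encoded as $(Y_0,b_0)=(0,0)$. The substance of the theorem lies in the two numerical bounds.

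For the \emph{upper bound} the plan is to construct a potential function $\Phi$ on the augmented state space that satisfies the Bellman‑type inequality
\[
\Phi(i,s)\;\ge\;\max_{\text{admissible actions}}\ \E\bigl[\,(\text{elements gained}) + \Phi(i+1,\text{next state})\,\bigr],
\]
equivalently, one for which $M_i := L_i + \Phi(i, Y_i, b_i)$ is a supermartingale under \emph{every} feasible policy, where $L_i$ is the number of elements selected through time $i$. Since $\Phi(n,\cdot,\cdot)=0$, optional stopping (or backward induction) then gives $\E[U^o_n(\pi_n)]\le \Phi(0,0,0)$ for every $\pi_n\in\Pi(n)$. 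Guided by a ``vertical budget'' heuristic --- from level $y$ a remaining increasing block followed by a decreasing block can traverse total vertical distance at most $(1-y)+1 = 2-y$, whereas from level $y$ a decreasing block alone can traverse at most $y$ --- the natural candidate is
\begin{equation*}
\Phi(i,y,0) = \bigl(2(n-i)(2-y)\bigr)^{1/2}, \qquad \Phi(i,y,1) = \bigl(2(n-i)\,y\bigr)^{1/2},
\end{equation*}
so that $\Phi(0,0,0) = (4n)^{1/2} = 2n^{1/2}$. One then has to verify the one‑step inequality. In the ``turned'' state this is exactly the classical \citeasnoun{SamSte:AP1981} one‑step estimate for on‑line monotone (decreasing) selection. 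In the ``not‑yet‑turned'' state one must show that $\Phi(i,y,0)$ dominates the expectation, over the next observation $X\sim U(0,1)$, of the best of the three moves available to any policy: reject (stay at $(y,0)$), extend the increasing block when $X>y$ (move to $(X,0)$), or turn when $X<y$ (move to $(X,1)$). Because $2-X<2-y$ when $X>y$ and $X<2-y$ when $X<y$, every admissible selection move decreases $\Phi$, and the concavity of $t\mapsto t^{1/2}$ makes this decrease large enough to absorb the unit gain in $L$; carrying out this estimate is the main technical obstacle. The per‑step slack it produces in the not‑turned inequality is strictly positive (of order $(n-i)^{-1}$, just as in the monotone case), so $M_i$ is a \emph{strict} supermartingale and one gets the strict bound $\E[U^o_n(\pi_n)]<2n^{1/2}$.

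For the \emph{lower bound} the plan is to exhibit and analyze one concrete sub‑optimal policy. Split the $n$ observations into the first $\lfloor n/2\rfloor$ and the last $\lceil n/2\rceil$. On the first block, run a near‑optimal on‑line \emph{increasing}‑selection policy, tuned to exhaust the level interval $[0,1]$; by the on‑line monotone selection results of \citeasnoun{SamSte:AP1981} this selects $n^{1/2} - O(\log n)$ elements and leaves the chooser at a peak level $P$ with $1-P$ of stochastic order $n^{-1/4}$. On the second block, wait an optimally tuned $O(n^{1/2})$ number of steps for an observation just below $P$ to start the decreasing block, then run a near‑optimal on‑line \emph{decreasing}‑selection policy from that level; since $P\to 1$ this block attaches legally to the increasing one and contributes $\bigl(2\cdot(n/2)\cdot P\bigr)^{1/2} - O(\log n) = n^{1/2}P^{1/2} - O(\log n)$ further elements. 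Summing the two contributions, using the elementary bound $P^{1/2}\ge 1-(1-P)$, and estimating the resulting $n^{1/4}$‑order correction through a (Gaussian) fluctuation analysis of the peak of an optimal increasing run --- where both the exponent $n^{1/4}$ and the constant $4(\pi/6)^{1/2}$ come from --- yields $\E[U^o_n(\pi^*_n)]\ge 2n^{1/2} - 4(\pi/6)^{1/2} n^{1/4} - O(1)$. The matching bounds then give $\E[U^o_n(\pi^*_n)]\sim 2n^{1/2}$. Finally, the whole scheme extends to subsequences with $d+1$ monotone blocks: the not‑yet‑turned budget $2-y$ becomes $d+1-y$ (one may still traverse $d$ further blocks), so the potential becomes $\bigl(2(n-i)(d+1-y)\bigr)^{1/2}$ with initial value $\bigl(2(d+1)n\bigr)^{1/2}$, while the lower‑bound construction chains together $d+1$ near‑optimal monotone phases of length $\sim\bigl(2n/(d+1)\bigr)^{1/2}$ each.
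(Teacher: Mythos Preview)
Your upper-bound route is a genuine alternative to the paper's and rests on the same ``vertical budget'': the paper's bookkeeping function is $g(s,0)=s$, $g(s,1)=2-s$, so your potential is precisely $\Phi(i,s,r)=\sqrt{2(n-i)\,(2-g(s,r))}$. The paper, however, never builds a supermartingale. It writes $\E[U^o_n]=\sum_i\E[b_i-a_i]$, applies Cauchy--Schwarz twice to get $\E[U^o_n]\le n^{1/2}\bigl(\sum_i\E[(b_i-a_i)^2]\bigr)^{1/2}$, and then proves a telescoping lemma showing $\sum_i\E[(b_i-a_i)^2]\le 2\,\E[g(S_n,R_n)]<4$. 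Your direct potential argument does go through --- in budget coordinates the accessible next-budget set from a not-yet-turned state with budget $u=2-y$ is $(0,2-u)\cup(1,u)\subsetneq(0,u)$, so the standard monotone one-step inequality for $\sqrt{2mu}$ already dominates the unimodal one --- but you should make that reduction explicit rather than invoking concavity in the abstract.

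The lower bound is where your plan diverges most and has a real gap. The paper does not run a near-optimal monotone policy on each half and track the random peak $P$; it uses a concrete \emph{fixed-window} rule (accept $X_i$ iff it lies within $2n^{-1/2}$ of $S_{i-1}$ on the appropriate side) and \emph{resets} $S_{n/2}=1$, so the two halves are independent, symmetric, and their concatenation is automatically unimodal regardless of where the first half ends. The analysis is then Wald's identity for the hitting time $\nu=\min\{i:S_i>1-2n^{-1/2}\text{ or }i\ge n/2\}$ followed by Bernstein's inequality on the tail sum $\sum_{i<n/2}\PP(\widetilde S_i>1-2n^{-1/2})$; the constant $4(\pi/6)^{1/2}$ is exactly what drops out of that Bernstein integral. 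Your scheme --- dependent halves, a random starting level $P$ for the decreasing run, and an unspecified ``Gaussian fluctuation analysis of the peak'' --- is a different calculation, and there is no reason it should reproduce this particular constant. At best your outline yields $2n^{1/2}-O(n^{1/4})$, enough for the asymptotic $\sim 2n^{1/2}$ but not for the stated second-order term.
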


In a natural sense that we will shortly make precise, the optimal policy $\pi_n^*$ is unique. Consequently, one can ask about the
\emph{distribution} of the length $U^o_n(\pi^*_n)$ of the subsequence that is selected by
the optimal policy, and there is a  pleasingly general argument that gives an upper bound for the variance. Moreover, that bound
is good enough to provide a weak law for $U^o_n(\pi^*_n)$.

\begin{Theorem}[Variance Bound] \label{tm:VarianceBound}
For the unique optimal policy $\pi^*_n \in \Pi(n)$, one has the bounds
\begin{equation}\label{selfboundIneq}
  \Var[U^o_n(\pi^*_n)] \leq \E[U^o_n(\pi^*_n)] < 2n^{1/2}.
\end{equation}
\end{Theorem}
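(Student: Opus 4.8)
The plan is to establish the variance bound via a martingale difference decomposition combined with a \emph{self-bounding} argument of the type that yields concentration for functions exhibiting bounded, one-directional sensitivity. First I would fix $n$ and the unique optimal policy $\pi_n^*$, and write $Y_i = \E[U^o_n(\pi_n^*) \mid \FF_i]$ for $0 \le i \le n$, so that $Y_0 = \E[U^o_n(\pi_n^*)]$ and $Y_n = U^o_n(\pi_n^*)$. The Doob decomposition gives $U^o_n(\pi_n^*) - \E[U^o_n(\pi_n^*)] = \sum_{i=1}^n \Delta_i$ with $\Delta_i = Y_i - Y_{i-1}$ a martingale difference sequence, and by orthogonality $\Var[U^o_n(\pi_n^*)] = \sum_{i=1}^n \E[\Delta_i^2]$. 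The crux is then to control $\sum_i \E[\Delta_i^2]$ by $\E[U^o_n(\pi_n^*)]$ itself.

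The key structural input is a one-sided Lipschitz property in the \emph{remaining} horizon: if at time $i-1$ one knows $X_1, \dots, X_{i-1}$ and the state of the optimal policy (which elements have been accepted, and the current monotone regime), then revealing $X_i$ changes the conditional expected total only by a bounded amount, and — crucially — the increment from ``accepting one more now'' is controlled. I would argue that $0 \le \E[U^o_n(\pi_n^*) \mid \FF_i, X_i = x] - \E[U^o_n(\pi_n^*) \mid \FF_{i-1}, \text{reject } X_i]$ is at most $1$ in a suitable pathwise sense, because the optimal continuation value is monotone in the available future budget and a single observation can add at most one element plus whatever cascading effect it has on the optimal downstream selection — and optimality forces that cascading effect to be nonnegative and the net change to be bounded by $1$. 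This is precisely the condition under which a self-bounding inequality holds: if $\sum_{i=1}^n (\Delta_i)_+^2 \le$ (something telescoping to $U^o_n(\pi_n^*)$), or more directly if $\E[(\Delta_i)^2 \mid \FF_{i-1}] \le \E[Z_i \mid \FF_{i-1}]$ where $Z_i \ge 0$ and $\sum_i Z_i \le U^o_n(\pi_n^*)$ pathwise, then summing and taking expectations gives $\Var[U^o_n(\pi_n^*)] \le \E[U^o_n(\pi_n^*)]$. I would identify $Z_i$ as the indicator that the optimal policy accepts $X_i$, possibly weighted by the one-step value gap, so that $\sum_i Z_i$ is exactly the selected length $U^o_n(\pi_n^*)$, and then verify $\E[\Delta_i^2 \mid \FF_{i-1}] \le \E[Z_i \mid \FF_{i-1}]$ using the bounded-difference bound $|\Delta_i| \le 1$ together with $\E[\Delta_i \mid \FF_{i-1}] = 0$, which forces $\E[\Delta_i^2 \mid \FF_{i-1}] \le \PP(\text{acceptance occurs or value changes} \mid \FF_{i-1})$.

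After the self-bounding inequality is in hand, the second inequality in \eqref{selfboundIneq} is immediate from Theorem~\ref{th:main}, since $\E[U^o_n(\pi_n^*)] < 2n^{1/2}$ is already proved there. The weak law then follows routinely from Chebyshev: $\Var[U^o_n(\pi_n^*)] / \big(\E[U^o_n(\pi_n^*)]\big)^2 \le 1/\E[U^o_n(\pi_n^*)] \to 0$ given the lower bound $\E[U^o_n(\pi_n^*)] \sim 2n^{1/2} \to \infty$, so $U^o_n(\pi_n^*) / \E[U^o_n(\pi_n^*)] \to 1$ in probability, and equivalently $U^o_n(\pi_n^*) / (2n^{1/2}) \to 1$ in probability.

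The main obstacle I anticipate is making the pathwise bound $|\Delta_i| \le 1$ (or the self-bounding inequality $\E[\Delta_i^2 \mid \FF_{i-1}] \le \PP(\text{accept}_i \mid \FF_{i-1})$) genuinely rigorous, because the optimal policy's \emph{state} is not simply the list of accepted values but also includes which monotone block one is currently in and the relevant running extremum, and one must check that the dynamic-programming value function is $1$-Lipschitz with respect to inserting a single new observation into the future stream. This requires a careful coupling: given two futures differing by one extra observation, I would run the optimal policy on the shorter one and lift it to a feasible (not necessarily optimal) policy on the longer one that selects a subsequence at least as long, and symmetrically bound the reverse direction — the asymmetry being exactly what produces the \emph{one-sided} control needed for self-bounding rather than a crude factor-of-$2$ bound. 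Once that coupling lemma is established, the rest is bookkeeping.
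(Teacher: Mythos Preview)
Your architecture is the same as the paper's: set $Y_i=\E[U^o_n(\pi^*_n)\mid\FF_i]$, write $\Var=\sum_i\E[\Delta_i^2]$, and try to bound each $\E[\Delta_i^2\mid\FF_{i-1}]$ by the conditional acceptance probability so that the sum telescopes to $\E[U^o_n(\pi^*_n)]$. Where you diverge from the paper is in how to justify that conditional bound, and there is a genuine gap at that point.

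You propose to deduce $\E[\Delta_i^2\mid\FF_{i-1}]\le\PP(\text{accept}_i\mid\FF_{i-1})$ from the two facts $|\Delta_i|\le 1$ and $\E[\Delta_i\mid\FF_{i-1}]=0$. That implication is false in general: take $\Delta_i=1$ on an event of probability $p$ and $\Delta_i=-p/(1-p)$ otherwise; both hypotheses hold when $p\le 1/2$, yet $\E[\Delta_i^2]=p/(1-p)>p$. The coupling argument you sketch at the end is aimed only at the bound $|\Delta_i|\le 1$, so even if it succeeds it does not close this gap. What is actually needed---and what the paper extracts from the Bellman equation rather than from a coupling---is the finer decomposition $\Delta_i=W_i+Z_i$ in which $W_i=v_{i+1}(S_{i-1},R_{i-1})-v_i(S_{i-1},R_{i-1})$ is $\FF_{i-1}$-measurable (it is the value of $\Delta_i$ on the rejection event, and it is the \emph{same} value for every rejected $X_i$) and $Z_i$ is supported on the acceptance event with $0\le Z_i\le 1$. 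From this one gets $\E[\Delta_i^2\mid\FF_{i-1}]=\E[Z_i^2\mid\FF_{i-1}]-W_i^2\le\E[Z_i^2\mid\FF_{i-1}]\le b^*-a^*=\PP(\text{accept}_i\mid\FF_{i-1})$. The two ingredients you are missing are (i) that rejection produces a \emph{deterministic} increment (so the randomness of $\Delta_i$ lives entirely on the acceptance event), and (ii) that $0\le Z_i\le 1$, which follows from the monotonicity of $s\mapsto v_{i+1}(s,r)$ and the indifference equations defining the optimal thresholds $a^*,b^*$. Neither of these comes out of a future-insertion coupling; they come from writing $Y_i=U^o_i(\pi^*_n)+v_{i+1}(S_i,R_i)$ explicitly via the dynamic program.
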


\begin{Corollary}[Weak Law for Unimodal Sequential Selections]
For the sequence of optimal policies $\pi^*_n \in \Pi(n)$, one has the limit
$$
U^o_n(\pi^*_n)/\sqrt{n} \stackrel{p}{\longrightarrow} 2  \quad \text{as } n \rightarrow \infty.
$$
\end{Corollary}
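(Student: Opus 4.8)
The plan is to obtain the weak law as an immediate consequence of the two preceding theorems via Chebyshev's inequality, so no substantially new estimate is required. First I would invoke Theorem~\ref{th:main}, which gives $\E[U^o_n(\pi^*_n)] \sim 2n^{1/2}$ and hence
\begin{equation*}
\frac{\E[U^o_n(\pi^*_n)]}{n^{1/2}} \longrightarrow 2 \quad \text{as } n \to \infty.
\end{equation*}
This pins down the location of the limit, so it remains only to show that the normalized random variable does not fluctuate much around its mean.

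For the fluctuation estimate I would use the variance bound of Theorem~\ref{tm:VarianceBound}: since $\Var[U^o_n(\pi^*_n)] \leq \E[U^o_n(\pi^*_n)] < 2n^{1/2}$, Chebyshev's inequality gives, for every $\varepsilon > 0$,
\begin{equation*}
\PP\!\left( \left| \frac{U^o_n(\pi^*_n)}{n^{1/2}} - \frac{\E[U^o_n(\pi^*_n)]}{n^{1/2}} \right| > \varepsilon \right) \leq \frac{\Var[U^o_n(\pi^*_n)]}{\varepsilon^2 n} < \frac{2}{\varepsilon^2 n^{1/2}} \longrightarrow 0.
\end{equation*}
Thus $U^o_n(\pi^*_n)/n^{1/2} - \E[U^o_n(\pi^*_n)]/n^{1/2} \to 0$ in probability, and adding back the deterministic sequence $\E[U^o_n(\pi^*_n)]/n^{1/2} \to 2$ --- say by the triangle inequality, or by Slutsky's theorem --- gives $U^o_n(\pi^*_n)/n^{1/2} \stackrel{p}{\longrightarrow} 2$, which is the assertion.

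There is no genuine obstacle at this stage: all of the content has been front-loaded into Theorem~\ref{th:main} and, crucially, into the self-bounding inequality $\Var[U^o_n(\pi^*_n)] \leq \E[U^o_n(\pi^*_n)]$ of Theorem~\ref{tm:VarianceBound}, which forces the variance to be of order $n^{1/2}$ and hence negligible on the scale $n$ that appears after normalization by $n^{1/2}$. If one preferred, the same two ingredients yield $L^2$-convergence of $U^o_n(\pi^*_n)/n^{1/2}$ to $2$ directly, which is formally stronger; I would note this but present the Chebyshev version, since it is the most transparent route to exactly the stated conclusion.
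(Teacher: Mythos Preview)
Your proposal is correct and matches the paper's own approach: the paper does not spell out a separate proof of the Corollary but simply notes (in Section~\ref{se:extensions}) that ``Chebyshev's inequality and Theorem~\ref{th:variance-bound-d-modal} now combine as usual to provide a weak law,'' which is exactly the argument you give. There is nothing to add or correct.
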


\medskip
\noindent
{\sl Organization of the Proofs.}
\smallskip

The proof of Theorem \ref{th:main} comes in two halves. First, we show by an elaboration of an argument of \citeasnoun{Gne:JAP1999}
that  there is an \emph{a priori} upper bound for $\E[U^o_n(\pi_n)]$ for all $n$ and all $\pi_n \in \Pi(n)$.
This argument uses almost nothing about the structure of the selection policy beyond the fact from Section \ref{se:Interval-policies-optimal}
that it suffices to consider policies that are specified by acceptance \emph{intervals}.
For the lower bound we simply construct a good (but suboptimal) policy. Here there is an obvious candidate, but the
proof of its efficacy seems to be more delicate than one might have expected.

The proof of Theorem \ref{tm:VarianceBound} in Section \ref{se:proof-variance-bound} exploits a martingale
that comes naturally from the Bellman equation.
The summands of the quadratic variation of this martingale are then found to have a fortunate relationship to the probability that an observation
is selected. It is this ``self-bounding" feature that leads one to the bound \eqref{selfboundIneq} of the variance by the mean.

In Section \ref{se:extensions} we outline analogs of Theorems \ref{th:main} and \ref{tm:VarianceBound} for
subsequences that can be decomposed into $d+1$ alternating monotone blocks (rather than just two). If one takes $d=0$,
this reduces to the monotone subsequence problem, and in this case only the variance bound is new.
Finally, in Section \ref{se:final-connections} we comment briefly on two conjectures. These deal with a more
refined understanding of $ \Var[U^o_n(\pi^*_n)]$ and with the naturally associated central limit theorem.

\section{Mean Bounds: Proof of Theorem \ref{th:main}}\label{se:proof-asy-result}

Since the distribution $F$ is assumed to be continuous and since the problem is unchanged by replacing $X_i$ by its monotone transformation
$F^{-1}(X_i)$,
we can assume without loss of generality that
the $X_i$ are uniformly distributed on $[0,1]$. Next, we introduce two tracking variables. First, we let $S_i$ denote the value of the last element
 that has been selected up to and including time $i$. We then let $R_i$  denote an indicator variable that tracks the monotonicity
of the selected subsequence; specifically we set $R_i=0$ if the selections made up to and including time $i$ are increasing; otherwise
we set $R_i = 1$.

The sequence of real values
$\{ S_i: R_i = 0,\, 1\leq i \leq n \}$ is thus a monotone increasing sequence,  though of course not in the strict sense
because there will typically be long patches where the successive
values of $S_i$ do not change. Similarly,
$\{ S_i: R_i = 1,\, 1\leq i \leq n \}$ is  monotone decreasing sequence, and the full sequence $\{S_i: \, 1\leq i \leq n \}$ is a
unimodal sequence --- in the non-strict sense that permits ``flat spots." As a convenience for later formulas, we also set $S_0=0$ and $R_0=0$.

\subsection*{The Class of Feasible Interval Policies}

Here we will consider feasible policies that have acceptance sets that are given by intervals.
It is reasonably obvious that any optimal policy must have this structure,
but for completeness we give a formal proof of this fact in Section \ref{se:Interval-policies-optimal}.

Now, if the value $X_i$ is under consideration for selection,
two possible scenarios can occur: if $R_{i-1}=0$ (so one is in the ``increasing part'' of the selected subsequence)
then a selectable $X_i$ can be \emph{above or below} $S_{i-1}$. On the other hand, if $R_{i-1}=1$ (and one is in the ``decreasing part'' of the
selected subsequence), then any selectable $X_i$ has to be smaller than $S_{i-1}$. Thus, to specify a feasible interval policy,
we just need to specify for each $i$ an interval $[a,b] \subset [0,1]$ where we accept $X_i$ if $X_i \in [a,b]$  and
we reject it otherwise.
Here, the values of the end-points of the interval are functions of
$i$, $S_{i-1}$, and $R_{i-1}$. In longhand, we write the acceptance interval
as
$$
\Delta_i(S_{i-1}, R_{i-1}) \equiv [a(i, S_{i-1}, R_{i-1}), \, b(i, S_{i-1}, R_{i-1})].
$$

There are some restrictions on the functions $a(i, S_{i-1}, R_{i-1})$ and  $b(i, S_{i-1}, R_{i-1})$.
To make these explicit we consider two sets of functions, $\AA$ and $\BB$.
We say
$a\in\AA$ provided that $a:\{1,2,...,n\}\times [0,1]\times\{0,1\} \rightarrow [0,1]$ and
$$
0 \leq a(i,s,r) \leq s \quad \text{ for all } s \in [0,1],\, r \in \{0,1\} \text{ and } 1\leq i \leq n.
$$
Similarly, we say $b\in\BB$ provided that $b:\{1,2,...,n\}\times [0,1]\times\{0,1\} \rightarrow [0,1]$
and
$$
 s \leq b(i,s,0) \leq 1 \quad \text{ for all } s \in [0,1] \text{ and } 1\leq i \leq n;
$$
$$
0 \leq b(i,s,1) = s  \quad \text{ for all } s \in [0,1] \text{ and } 1\leq i \leq n.
$$
Together a pair $(a,b) \in \AA \times \BB$  defines an \emph{interval policy} $\pi_n \in \Pi(n)$ where we
accept $X_i$ at time $i$ if and only if
$
X_i \in \Delta_i(S_{i-1}, R_{i-1}).
$
We let $\Pi'(n)$ denote the set of feasible interval policies.

\subsection*{Three Representations}

First we note that for $S_i$ we have a simple update rule driven by whether $X_i$ is rejected or accepted:
\begin{equation*}
S_i =
\begin{cases}
S_{i-1} \quad &\text{if } X_i \notin \Delta_i(S_{i-1}, R_{i-1})\\
X_i \quad &\text{if } X_i \in \Delta_i(S_{i-1}, R_{i-1}).
\end{cases}
\end{equation*}
For the sequence $\{R_i\}$ the update rule is initialized by setting $R_0=0$; one should then note that
only one change takes place in the values of the sequence $\{R_i\}$. Specifically, we change to $R_i=1$  at the first $i$ such that
$S_i< S_{i-1}$, i.e. the first instance where we have a decrease in our sequence of selected values.
For specificity, we can rewrite this rule as
\begin{equation}\label{eq:cases-Ri}
R_i = \left\{
        \begin{array}{ll}
          1 & \hbox{if $ X_i \in \Delta_i(S_{i-1}, R_{i-1})$} \\
            & \hbox{and $S_{i-1}=\max\{S_k:\, 1\leq k\leq i\}$} \vspace{10pt}\\
          R_{i-1} & \hbox{otherwise.}
        \end{array}
      \right.
\end{equation}
Finally, using $\1(E)$ to denote the indicator function of the event $E$, we see by counting
the occurrences of the ``selection events" $X_i \in \Delta_i(S_{i-1}, R_{i-1})$,
that for each $1\leq k \leq n$ the number of selections made up to and including time $k$ is given by the sum of the indicators
\begin{equation}\label{eq:selection-central}
U^o_k(\pi_n) =\sum_{i=1}^k \1 \left(X _i \in \Delta_i(S_{i-1}, R_{i-1}) \right).
\end{equation}

\subsection*{Proof of the Upper Bound (An \emph{a priori} Prophet Inequality)}

The immediate task is to show that
for all $n\geq 1$ and all $\pi_n \in \Pi'(n)$, one has the inequality
\begin{equation}\label{ASinequality}
\E[U^o_n(\pi_n)] < 2 n^{1/2} .
\end{equation}
It will then follow from Proposition \ref{pr:interval-optimal} in Section \ref{se:Interval-policies-optimal}
that the bound \eqref{ASinequality} holds for all  $\pi_n \in \Pi(n)$.
We start with the representation \eqref{eq:selection-central} and then after
two applications of the Cauchy-Schwarz inequality we have
\begin{align*}
\E[ U^o_n(\pi_n)]&=\sum_{i=1}^n \E\left[ b(i, S_{i-1}, R_{i-1})-a(i, S_{i-1}, R_{i-1}) \right] \\
&\leq n^{1/2} \left\{ \sum_{i=1}^n \left(\E \left[ b(i, S_{i-1}, R_{i-1})-a(i, S_{i-1}, R_{i-1})\right] \right)^2 \right\}^{1/2} \\
&\leq n^{1/2} \left\{ \sum_{i=1}^n \E\left[\left(b(i, S_{i-1}, R_{i-1})-a(i, S_{i-1}, R_{i-1})\right)^2 \right] \right\}^{1/2}.
\end{align*}
The target bound \eqref{ASinequality} is therefore an immediate consequence of the following --- curiously general --- lemma.

\begin{Lemma}[Telescoping Bound]\label{lm:inequality-L2-norm}
For each $n\geq 1$ and for any strategy $\pi_n \in \Pi'(n)$, one has the inequality
\begin{equation}\label{gBound}
\sum_{i=1}^n \E\left[\left(b(i, S_{i-1}, R_{i-1})-a(i,S_{i-1}, R_{i-1})\right)^2\right] < 4.
\end{equation}
\end{Lemma}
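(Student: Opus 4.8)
The plan is to exploit the monotonicity structure of the selected sequence $\{S_i\}$ and turn the sum in \eqref{gBound} into a telescoping sum whose terms are controlled by the increments (or decrements) of $S_i$. The key observation is that the acceptance interval $\Delta_i(S_{i-1},R_{i-1}) = [a(i,S_{i-1},R_{i-1}),\,b(i,S_{i-1},R_{i-1})]$ is, by the definition of $\AA$ and $\BB$, an interval that always \emph{contains the point} $S_{i-1}$: indeed $a(i,s,r)\le s$ always, while $b(i,s,0)\ge s$ and $b(i,s,1)=s$. Hence, conditionally on $\FF_{i-1}$, the probability that $X_i$ is accepted is $b(i,S_{i-1},R_{i-1})-a(i,S_{i-1},R_{i-1})$, and on the event that $X_i$ is accepted, the new value $S_i = X_i$ lands somewhere in this interval straddling $S_{i-1}$.

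First I would split the sum according to the monotonicity flag: let $\sigma$ be the (random) time at which $R$ switches from $0$ to $1$ (with $\sigma = n+1$ if it never switches). On $\{i \le \sigma\}$ we are in the increasing regime, where $S_{i-1}=\max_{k\le i-1}S_k$; on $\{i > \sigma\}$ we are in the decreasing regime, where the selected values form a nonincreasing sequence and the acceptance interval is $[a(i,S_{i-1},1),\,S_{i-1}]$. In each regime I want to show that the conditional expectation $\E[(b-a)^2 \mid \FF_{i-1}]$ is dominated by a quantity of the form $2\,\E[\,|S_i - S_{i-1}|\,\mid \FF_{i-1}\,] \cdot (\text{something bounded by }1)$ — more precisely, that $(b-a)^2$ is at most twice the expected jump $\E[\,|S_i-S_{i-1}|\mid\FF_{i-1}]$, since a uniform point in an interval of length $\ell$ containing the left endpoint region has mean distance to $S_{i-1}$ of order $\ell$. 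Concretely, if $X_i$ is uniform and $\Delta_i = [a,b]$ with $a \le S_{i-1}\le b$, then $\E[|X_i - S_{i-1}|\,\1(X_i\in\Delta_i)\mid\FF_{i-1}] = \tfrac12\big((b-S_{i-1})^2 + (S_{i-1}-a)^2\big) \ge \tfrac14 (b-a)^2$, which gives $(b-a)^2 \le 4\,\E[\,|S_i - S_{i-1}|\,\1(\text{accept})\mid\FF_{i-1}\,]$.

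Summing over $i$ and taking expectations, the right-hand side becomes $4\,\E\big[\sum_{i=1}^n |S_i - S_{i-1}|\big]$. Now the telescoping comes in: on the increasing stretch $1\le i\le \sigma$ the jumps $S_i - S_{i-1}$ are nonnegative and sum to at most $S_\sigma - S_0 \le 1$; on the decreasing stretch $\sigma < i \le n$ the jumps are nonpositive and $\sum |S_i - S_{i-1}| = S_\sigma - S_n \le 1$. (One must handle the single switching step at $i=\sigma$ with a little care, but there the value only moves within $[0,1]$ so that step contributes at most $1$ as well; a cleaner accounting absorbs it into one of the two monotone runs.) Therefore $\sum_{i=1}^n |S_i - S_{i-1}| \le 2$ surely, and hence $\sum_{i=1}^n \E[(b-a)^2] \le 4\cdot\E[2]$ — which is not quite $< 4$, so I will need to sharpen the constant. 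The sharpening should come from the fact that the bound $(b-a)^2 \le 4\,\E[|S_i-S_{i-1}|\,\1(\text{accept})]$ is lossy: using the exact identity $\E[|X_i-S_{i-1}|\,\1(X_i\in\Delta_i)\mid\FF_{i-1}] = \tfrac12((b-S_{i-1})^2+(S_{i-1}-a)^2)$ together with $(b-a)^2 \le 2((b-S_{i-1})^2 + (S_{i-1}-a)^2)$ gives $(b-a)^2 \le 4\E[|S_i - S_{i-1}|\mid\FF_{i-1}]$ only when the interval is balanced; in the decreasing regime $b = S_{i-1}$ forces $(b-S_{i-1})^2 = 0$, so there $(b-a)^2 = (S_{i-1}-a)^2 = 2\E[|S_i-S_{i-1}|\mid\FF_{i-1}]$, a factor-two gain, and that regime contributes the bulk. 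I expect that bookkeeping the two regimes with their respective constants ($2$ versus $4$) against the two telescoping budgets (each $\le 1$) yields a total strictly below $4$, with the strictness coming from the fact that $S_\sigma$ is a.s. strictly between $0$ and $1$ and the increasing run cannot actually exhaust its full unit budget while also leaving room for a decrease.

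The main obstacle will be precisely this constant-chasing: getting a clean strict inequality $< 4$ rather than the soft $\le$ that the crude telescoping delivers, and in particular handling the transition index $i = \sigma$ (where $S_i$ may move both up relative to the increasing chain's logic and then be reinterpreted as the peak) without double-counting its contribution to the total variation budget. I would resolve this by defining the peak value $M = \max_i S_i = S_\sigma$ (when $\sigma \le n$) and writing $\sum_i |S_i - S_{i-1}| = M + (M - S_n) = 2M - S_n \le 2M \le 2$, then arguing that in the decreasing regime the tighter constant $2$ (not $4$) applies, so the decreasing contribution is at most $2(M - S_n)$ and the increasing contribution at most $4\cdot M$ — wait, that overshoots, so the honest route is to keep constant $4$ uniformly, note $\sum|S_i - S_{i-1}| < 2$ strictly because $S_n > 0$ a.s. and $M < 1$ a.s., and conclude $\sum_i \E[(b-a)^2] < 8\cdot\tfrac12 = 4$ only if the uniform constant is really $2$, which it is \emph{not} in the increasing regime — so the genuinely necessary step is to prove the interval is, in the increasing regime too, one-sided enough (because an optimal-candidate interval never has reason to extend far above $S_{i-1}$) or else to run the Cauchy–Schwarz/telescoping argument with the per-regime constants. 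I anticipate the paper's actual argument keeps things slicker than my sketch — perhaps by a direct $(b-a)^2 \le (b^2 - a^2) + \text{correction}$ manipulation that telescopes immediately — so I would first look for such an algebraic identity before committing to the regime-splitting above.
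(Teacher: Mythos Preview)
Your telescoping instinct is correct, but the telescoping quantity $|S_i-S_{i-1}|$ is the wrong one, and this is a genuine gap that your constant-chasing cannot close. With the inequality $(b-a)^2\le 2\big((b-s)^2+(s-a)^2\big)=4\,\E[\,|S_i-S_{i-1}|\mid\FF_{i-1}]$ on $\{R_{i-1}=0\}$, the budget you must control is $\sum_{i:\,R_{i-1}=0}|S_i-S_{i-1}|$, and this can be as large as $2$: climb to a peak $M\approx 1$ with one-sided intervals, then at the switching step accept from $[0,1]$ and drop to $S_\sigma\approx 0$. For that policy the true value of $\sum\E[(b-a)^2]$ is close to $4$, while your bound yields roughly $4\cdot 2+2\cdot 1$, overshooting by a factor near $2$. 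The loss is concentrated at the switching step, where $(b-a)^2$ can equal $1$ but $4\,\E[|S_\sigma-S_{\sigma-1}|]$ is about $2$; no refinement that only sees $|S_i-S_{i-1}|$ will recover this.

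The paper's fix is to telescope a different quantity. Define $g(s,0)=s$ and $g(s,1)=2-s$, so $0\le g\le 2$ and $g(S_i,R_i)$ is nondecreasing in $i$ along every trajectory. The crucial feature is that at the switch (accepting $x<s$ while $R_{i-1}=0$) one has $g(x,1)-g(s,0)=2-x-s$, which exceeds your $|x-s|=s-x$ by the ``unused headroom'' $2(1-s)$. A direct computation then gives, on $\{R_{i-1}=0\}$,
\[
\E\big[g(S_i,R_i)-g(S_{i-1},0)\mid\FF_{i-1}\big]
=\tfrac12(b-a)^2+(s-a)(2-s-b)\ \ge\ \tfrac12(b-a)^2,
\]
with the analogous identity (and equality) on $\{R_{i-1}=1\}$. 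So the constant is $2$ uniformly in both regimes, and telescoping yields $\sum_i\E[(b-a)^2]\le 2\,\E[g(S_n,R_n)]<4$. Your speculation in the last sentence that ``a direct $(b-a)^2\le(b^2-a^2)+\text{correction}$ manipulation'' exists is exactly right; the correction is the nonnegative cross term $2(s-a)(2-s-b)$, and $g$ is the bookkeeping device that makes it telescope.
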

\begin{proof}
We first introduce a bookkeeping function $g:[0,1]\times \{0,1\}\rightarrow [0,2]$ by setting
\begin{equation*}\label{eq:g}
g(s, r) = \left\{
                      \begin{array}{ll}
                        s, & \hbox{if $r=0$} \\
                        2-s, & \hbox{if $r=1$.}
                      \end{array}
                    \right.
\end{equation*}
Trivially $g$ is bounded by $2$, and we will argue
by conditioning and telescoping that the left side of inequality \eqref{gBound} is bounded
above by $2 \, \E\left[g(S_{n}, R_{n})\right]<4$.
Specifically, if we condition on $\FF_{i-1}$,
then the independence and uniform distribution of $X_i$ gives us, after a few lines of straightforward calculation, that
\begin{align*}
\E [g( S_{i}, &R_{i})  - g(S_{i-1}, 0)~|~\FF_{i-1}] \\
        & =\int_{a(i,S_{i-1}, 0)}^{S_{i-1}} (g(x,1) - S_{i-1})\, dx + \int_{S_{i-1}}^{b(i,S_{i-1}, 0)} (g(x,0) - S_{i-1})\, dx\\
        & =\frac{1}{2}\left(b(i,S_{i-1}, 0)- a(i,S_{i-1}, 0)\right)^2 \\
        & \quad + \left(S_{i-1}-a(i,S_{i-1}, 0)\right)\left(2-S_{i-1}-b(i,S_{i-1}, 0)\right).
\end{align*}
Since last summand is non-negative we have the tidier bound
\begin{equation}\label{eq:lb-1}
  \left(b(i,S_{i-1}, 0)- a(i,S_{i-1}, 0)\right)^2 \leq 2 \, \E [g( S_{i}, R_{i}) - g(S_{i-1}, 0)~|~\FF_{i-1}].
\end{equation}
\pagebreak
By an analogous direct calculation one also has the identity
\begin{align}
 \E [g(S_{i}, 1) - g(S_{i-1}, 1)~|~\FF_{i-1}]  & = \int_{a(i,S_{i-1}, 1)}^{S_{i-1}} (g(x,1) - g(S_{i-1},1))\, dx  \label{eq:lb-2} \\
 \nonumber                                           & = \frac{1}{2}\left(b(i,S_{i-1}, 1)- a(i,S_{i-1}, 1)\right)^2.
\end{align}
Since $R_{i-1}=1$ implies $R_i=1$, we can write $g(S_i,R_i) -g(S_{i-1}, R_{i-1})$ as the sum
$$
\{g(S_i,R_i) -g(S_{i-1}, 0)\} \1(R_{i-1}=0)+\{g(S_i,1) -g(S_{i-1}, 1)\} \1(R_{i-1}=1),
$$
so the two bounds \eqref{eq:lb-1} and \eqref{eq:lb-2} give us the key estimate
\begin{equation*}\label{eq:gS-telescope}
 \left(b(i,S_{i-1}, R_{i-1})- a(i,S_{i-1}, R_{i-1})\right)^2 \leq 2 \, \E [g( S_{i}, R_{i}) - g(S_{i-1}, R_{i-1})~|~\FF_{i-1}] .
\end{equation*}
Finally, when we take the total expectation and sum, one sees that telescoping gives
$$
\sum_{i=1}^n \E\left[\left(b(i, S_{i-1}, R_{i-1})-a(i,S_{i-1}, R_{i-1})\right)^2\right] \leq 2\, \E\left[g(S_{n}, R_{n})\right] < 4,
$$
just as needed.
\end{proof}

\subsection*{Proof of the Lower Bound (Exploitation of Suboptimality)}

We construct an explicit policy $\widetilde \pi_n \in \Pi(n)$ that is close enough to optimal to give us the bound
\begin{equation}\label{eq:lower-bound-EUopt}
2 n^{1/2} - 4 (\pi/6)^{1/2} n^{1/4} - O(1) < \E[U^o_n(\pi^*_n)].
\end{equation}
The basic idea is to make an approximately optimal choice of an increasing subsequence from the sample $\{X_i: 1 \leq i \leq n/2\}$
and an approximately optimal choice of a decreasing subsequence
from the sample $\{X_i: n/2 +1 \leq i \leq n\}$.  The cost of giving up a flexible choice of
the ``turn-around time" is substantial, but this
class of policies is still close enough to optimal to give required bound \eqref{eq:lower-bound-EUopt}.

For the moment, we assume that $n$ is even. We then select observations according to the following process:
\begin{itemize}
  \item For $ 1 \leq i \leq n/2$ we select the observation $X_i$ if and only if  $X_i$ falls in  the interval
  between $ S_{i-1}$ and $\min \{ 1, \, S_{i-1} + 2 n^{-1/2} \}. $
  \item We set $S_{n/2} = 1$ and for $ n/2 + 1 \leq i \leq n $ we select the observation $X_i$ if and only if $X_i$ falls in the
  interval between $\max \{ 0, \, S_{i-1} - 2 n^{-1/2} \}$ and $S_{i-1} . $
\end{itemize}
Here, of course, the selections for $1\leq i\leq n/2$ are increasing and the selections for $n/2+1\leq i\leq n$ are decreasing,
so the selected subsequence is indeed unimodal.

We then consider the stopping time
$$
\nu =  \min \{ i: S_i > 1- 2 n^{-1/2} \text{ or } i \geq n/2 \},
$$
and we note that the representation \eqref{eq:selection-central}, the suboptimality of the policy $\widetilde \pi_n$,
and the symmetry between our policy on $ 1 \leq i \leq n/2$ and on $ n/2+1 \leq i \leq n$
will give us the lower bound
\begin{equation}\label{eq:Unu}
2\, \E\left[\sum_{i=1}^{\nu} \1 \left(X _i \in [S_{i-1}, S_{i-1}+2n^{-1/2}] \right)\right]
\leq \E[U^o_n(\widetilde \pi_n)] \leq \E[U^o_n(\pi^*_n)].
\end{equation}
Wald's Lemma now tells us that
$$
\E\left[\sum_{i=1}^{\nu} \1 \left(X _i \in [S_{i-1}, S_{i-1}+2n^{-1/2}] \right)\right] = 2 \, n^{-1/2} \E [ \nu ], $$
so we have
$$
4 \, n^{-1/2} \, \E [ \nu ] \leq  \E[U^o_n(\pi^*_n)].
$$
The main task is to estimate $\E [ \nu ]$.
It is a small but bothersome point that the summands $\1 \left(X _i \in [S_{i-1}, S_{i-1}+2n^{-1/2}] \right)$
are not i.i.d. over the entirety of the range $i \in [1,n/2]$; the distribution of the
last terms differ from that of the predecessors.
To deal with this nuisance, we
take $Z_j$, $1\leq j< \infty$, to be a sequence of random variables defined by setting
\begin{equation*}
Z_j =  \left\{
             \begin{array}{ll}
               0 & \hbox{w.p. $1-2n^{-1/2}$} \\
               U_j & \hbox{w.p. $2n^{-1/2}$,}
             \end{array}
           \right.
\end{equation*}
where the $U_j$'s are independent and uniformly distributed on $[0, 2n^{-1/2}]$. Easy calculations now give us
for all $1\leq j<\infty$ that
\begin{equation}\label{varianceETC}
\E Z_j = \frac{2}{n},\quad \Var[Z_j] = \frac{8 n^{1/2} - 12}{3n^2} < \frac{8}{3 n^{3/2}}, \quad \text{and } |Z_j - \E Z_j| < \frac{2}{n^{1/2}}.
\end{equation}
Next, if we set $\widetilde S_0 \equiv 0$ and put
$$
\widetilde S_i = \sum_{j=1}^i Z_j, \quad \text{for }1\leq i \leq n,
$$
for $1\leq i \leq \nu$, we have $S_i \stackrel{d}{=}  \widetilde S_i$.
Setting
$ \widetilde\nu = \min \{ i: \widetilde S_i > 1- 2 n^{-1/2} \text{ or } i \geq n/2 \} $
we also have $\nu \stackrel{d}{=} \widetilde \nu$,
so to estimate $\E[\nu]$ it then suffices to estimate
\begin{equation*}\label{eq:Enu-tilde-tail-sum1}
\E [ \widetilde \nu ] =\!\!\! \sum_{i=0}^{n/2 - 1} \PP \left( \widetilde \nu > i \right)
=\!\!\!  \sum_{i=0}^{n/2 - 1}  \PP \left( \widetilde S_i
\leq 1 - 2 n^{-1/2} \right)=\frac{n}{2} - \!\! \sum_{i=0}^{n/2 - 1}  \PP \left( \widetilde S_i > 1 - 2 n^{-1/2} \right).
\end{equation*}
The proof of the lower bound \eqref{eq:lower-bound-EUopt} will then be complete once we check that
\begin{equation} \label{eq:tail-estimate}
\sum_{i=0}^{n/2 - 1}  \PP \left( \widetilde S_i > 1 - 2 n^{-1/2} \right) <  (\pi/6)^{1/2} n^{3/4} +  \lceil n^{1/2} \rceil.
\end{equation}
This bound turns out to be a reasonably easy consequence of Bernstein's inequality
\citeaffixed[Theorem 6]{Lug:LN2009}{c.f.,}
which asserts that for \emph{any} i.i.d sequence $\{Z_j\}$ with the almost sure bound $| Z_j - \E Z_j | \leq M$ one has for all $ t > 0$ that
\begin{equation*}
\PP \left( \sum_{j=1}^i \left\{ Z_j - \E Z_j \right\} > t \right)
\leq \exp\left\{ - \frac{t^2}{ 2 i \Var[Z_1] + 2 M t / 3}\right\}.
\end{equation*}
If we set $n^* = \lfloor n/2 - n^{1/2}- 1 \rfloor$, then Bernstein's inequality together with the bounds \eqref{varianceETC}
and some simplification will give us
\begin{align}
\sum_{i=0}^{n/2 - 1}  \PP \left( \widetilde S_i > 1 - 2 n^{-1/2} \right)
 & \leq  \lceil n^{ 1/2 } \rceil +  \sum_{i=0}^{ n^* }    \PP \left( \widetilde S_i > 1 - 2 n^{-1/2} \right)
   \nonumber \\
 & \leq \lceil n^{ 1/2 } \rceil +  \sum_{i=0}^{ n^* } \exp \left\{ - \frac{3 \left(-2 i - 2 n^{1/2} + n\right)^2}{8 n  \left( n^{1/2} - 1  \right)} \right\}.
 \nonumber
\end{align}
The summands are increasing, so the sum is bounded by

\begin{equation*}
\int_0^{ n/2 - n^{1/2}} \!\!\!\!\!\!\!\!\!\!\!
\exp \left\{ - \frac{3 \left(-2 u - 2 n^{1/2} + n\right)^2}{8 n  \left( n^{1/2} - 1  \right)} \right\} du
=(2/3)^{1/2}(n^{3/2} - n)^{1/2}  \int_0^{\alpha(n)} \!\!\! e^{- u^2} \, du,
\end{equation*}
where $\alpha(n) = (3/8)^{1/2} \left(n^{1/2} -2\right) (n^{1/2}-1)^{-1/2}$.
Upon bounding the last integral by $\pi^{1/2} / 2 $, one then completes the proof of the target bound \eqref{eq:tail-estimate}.
Finally, we note that if $n$ is odd, one can simply ignore the last observation at the cost of
decreasing our lower bound by at most one.

\medskip
\noindent
{\sl Remark.} A benefit of Bernstein's inequality (and the slightly sharper Bennett inequality) is that one gets to take advantage
of the good bound on $\Var[Z_j]$. The workhorse Hoeffding inequality would be blind to this useful information.

\section{Variance Bound: Proof of Theorem \ref{tm:VarianceBound}}\label{se:proof-variance-bound}

To prove the variance bound in Theorem \ref{tm:VarianceBound} we need some
of the machinery of the Bellman equation and dynamic programming. To introduce the
classical backward induction, we first set
$v_i(s,r)$ equal to the expected length of the longest unimodal
subsequence of $\{X_i, X_{i+1},\ldots, X_n\}$ that is obtained by
sequential selection when $S_{i-1}=s$ and $R_{i-1}=r$.
We then have the ``terminal conditions"
$$
v_n(s,0)=1, \quad v_n(s,1)=s, \quad \hbox{for all $s\in [0,1]$}
$$
and we set
$$
v_{n+1}(s,r)\equiv 0 \quad \hbox{for all $s\in [0,1]$ and $r\in\{0,1\}$}.
$$
For  $1\leq i\leq n-1$  we have the \emph{Bellman equation}:
\begin{equation}\label{eq:BellmanLUS}
v_i(s,r)= \left\{
                  \begin{array}{ll}
                    \int_0^{s} \max\left\{v_{i+1}(s,0),1+v_{i+1}(x,1)\right\}\,dx &\;\;\;\;\;\;\;\;\;\;\hbox{if $r=0$} \\
                    + \int_{s}^1 \max\left\{v_{i+1}(s,0),1+v_{i+1}(x,0)\right\}\,dx& \\
                                            & \\
                    (1-s)v_{i+1}(s,1)                                           & \;\;\;\;\;\;\;\;\;\;\hbox{if $r=1$}\\
                    +\int_0^{s} \max\left\{v_{i+1}(s,1),1+v_{i+1}(x,1)\right\}\,dx. & \\
                  \end{array}
                \right.
\end{equation}
One should note that the map $s \mapsto v_i(s,0)$ is continuous
and strictly decreasing on $[0,1]$ for $1\leq i\leq n-1$
with $v_n(s,0)=1$ for all $s\in [0,1]$. In addition, the map
$s \mapsto v_i(s,1)$ is continuous and strictly increasing on $[0,1]$ for all
$1\leq i\leq n$.

If we now define $a^*: \{1,2, \ldots, n\} \times [0,1]\times\{0,1\} \rightarrow [0,1]$ by setting
\begin{equation}\label{eq:optimal-threshold-down}
a^*(i, s, r)= \inf \left\{ x\in [0,s]:\, v_{i+1} (s, r) \leq 1+v_{i+1}(x, 1) \right\}, \\
\end{equation}
then we have $a^*\in\AA$.
Similarly, if we define $b^*: \{1,2, \ldots, n\} \times [0,1]\times\{0,1\} \rightarrow [0,1]$ by setting
\begin{equation}\label{eq:optimal-threshold-up}
b^*(i, s, r)= \left\{
                \begin{array}{ll}
                  \sup \left\{ x\in [s, 1]:\, v_{i+1} (s, 0) \leq 1+v_{i+1}(x, 0) \right\} & \hbox{if $r=0$.} \\
                  s & \hbox{if $r=1$.}
                \end{array}
              \right.
\end{equation}
then we have $b^*\in\BB$. Here, $a^*(i, s, r)$ and $b^*(i, s, r)$ are
state-dependent thresholds for which one is indifferent between (i)
selecting the current observation $x$, adjusting $r$ to $r'$ as in
\eqref{eq:cases-Ri},  and continuing to act optimally with new state pair $(x,r')$,
or (ii) rejecting the current observation, $x$, and continuing to act
optimally with unchanged state pair, $(s,r)$.

By the Bellman equation \eqref{eq:BellmanLUS} and the continuity and monotonicity properties of the
value function, the values $a^*$ and $b^*$
provide us with a unique acceptance interval for all $1\leq i\leq n$ and all pairs $(s,r)$.
The policy $\pi^*_n$ associated with $a^*$ and $b^*$ then accepts $X_i$ at time $1\leq i\leq n$
if and only if
$$
X_i \in \Delta^*_i(S_{i-1}, R_{i-1}) \equiv [a^*(i, S_{i-1}, R_{i-1}),\, b^*(i, S_{i-1}, R_{i-1})],
$$
where, as in Section \ref{se:proof-asy-result},
$S_{i-1}$ is the value of the last observation selected
up to and including time $i-1$, and $R_{i-1}$ tracks the direction of the monotonicity
of the subsequence selected up to and including time $i-1$.
In Section \ref{se:Interval-policies-optimal} we will prove that this policy is indeed
the unique optimal policy for the sequential selection of a unimodal subsequence.

We do not need a detailed analysis of $a^*$ and $b^*$, but it is useful to collect some facts.
In particular, one should note that
$a^*(i, s, r)=0$ whenever  $v_{i+1} (s,r)\leq 1$ and $b^*(i, s, 0)=1$ whenever $v_{i+1} (s,0)\leq 1$.
In addition, the difference $b^*(i, s, r) - a^*(i, s, r)$ provides us with an explicit bound
on the increments of the value function $v_i(s,r)$, as the following lemma suggests.

\begin{Lemma}\label{lm:Bellman-bounds}
For all $s\in [0,1]$, $r \in \{0,1\}$ and $1\leq i\leq n$, we have
\begin{equation}\label{eq:BellmanBound2}
0 \leq v_{i}(s,r)-v_{i+1}(s,r)\leq b^*(i, s, r) - a^*(i, s, r) \leq 1.
\end{equation}
\end{Lemma}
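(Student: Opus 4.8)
The plan is to read all three inequalities of \eqref{eq:BellmanBound2} off the Bellman equation \eqref{eq:BellmanLUS}, using only the already-recorded monotonicity of $x\mapsto v_{i+1}(x,0)$ (decreasing) and $x\mapsto v_{i+1}(x,1)$ (increasing), together with the threshold formulas \eqref{eq:optimal-threshold-down}--\eqref{eq:optimal-threshold-up} and one auxiliary state-domination. The rightmost bound $b^*(i,s,r)-a^*(i,s,r)\le 1$ is immediate, since $0\le a^*(i,s,r)\le s$ and, by \eqref{eq:optimal-threshold-up}, either $s\le b^*(i,s,0)\le 1$ or $b^*(i,s,1)=s$. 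The leftmost bound $v_i(s,r)\ge v_{i+1}(s,r)$ is also free: because $v_{i+1}(s,r)$ is constant in the integration variable, $\int_0^1 v_{i+1}(s,r)\,dx=v_{i+1}(s,r)$, and subtracting this from the relevant branch of \eqref{eq:BellmanLUS} exhibits $v_i(s,r)-v_{i+1}(s,r)$ as an integral of a nonnegative $\max\{0,\cdot\}$ quantity --- which just says that declining $X_i$ is always available.

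For the middle bound I would first resolve the maxima in \eqref{eq:BellmanLUS} using the interval structure: since $x\mapsto v_{i+1}(x,1)$ is increasing, $1+v_{i+1}(x,1)\ge v_{i+1}(s,0)$ precisely for $x\ge a^*(i,s,0)$, and since $x\mapsto v_{i+1}(x,0)$ is decreasing, $1+v_{i+1}(x,0)\ge v_{i+1}(s,0)$ precisely for $x\le b^*(i,s,0)$. Substituting and again subtracting $\int_0^1 v_{i+1}(s,0)\,dx$ gives the identity
\[
v_i(s,0)-v_{i+1}(s,0)=\int_{a^*(i,s,0)}^{s}\bigl(1+v_{i+1}(x,1)-v_{i+1}(s,0)\bigr)\,dx+\int_{s}^{b^*(i,s,0)}\bigl(1+v_{i+1}(x,0)-v_{i+1}(s,0)\bigr)\,dx,
\]
and, in the same way, $v_i(s,1)-v_{i+1}(s,1)=\int_{a^*(i,s,1)}^{s}\bigl(1+v_{i+1}(x,1)-v_{i+1}(s,1)\bigr)\,dx$. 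In each integrand the bracketed quantity is nonnegative on the indicated interval (this is exactly how $a^*,b^*$ were defined), so it remains only to show it is $\le 1$; the integral is then bounded by the interval length $b^*(i,s,r)-a^*(i,s,r)$, which is the assertion.

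Two of the three bracketed quantities are $\le 1$ by monotonicity alone: $v_{i+1}(x,0)\le v_{i+1}(s,0)$ for $x\ge s$, and $v_{i+1}(x,1)\le v_{i+1}(s,1)$ for $x\le s$. The only real input is $v_{i+1}(x,1)\le v_{i+1}(s,0)$ for $x\le s$, which I would deduce from the state-domination $v_j(s,1)\le v_j(s,0)$ for all $j$ and all $s$ (then chain $v_{i+1}(x,1)\le v_{i+1}(s,1)\le v_{i+1}(s,0)$). This domination has a one-line backward-induction proof: it holds at $j=n$ because $v_n(s,1)=s\le 1=v_n(s,0)$, and the inductive step follows from \eqref{eq:BellmanLUS} by lowering each max in the $r=0$ branch to the corresponding quantity with $v_{i+1}(s,0)$ replaced by $v_{i+1}(s,1)$. (Alternatively, and without induction: from state $(s,0)$ the feasible policy ``accept only observations below the currently selected value'' realizes exactly the decreasing subproblem associated with state $(s,1)$, so $v_j(s,0)\ge v_j(s,1)$.)

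I expect no genuine obstacle here; the one point demanding care is the bookkeeping in resolving the maxima, in particular checking that the degenerate cases --- $a^*(i,s,0)=s$ (empty turn-around interval, which makes the first integral vanish), $a^*(i,s,r)=0$, and $b^*(i,s,0)=1$ --- are all covered, which they are, uniformly, by the two monotonicity facts and the domination lemma above.
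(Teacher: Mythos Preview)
Your proof is correct and follows essentially the same route as the paper: both derive the identity expressing $v_i(s,r)-v_{i+1}(s,r)$ as an integral over $[a^*,b^*]$ of the ``gain'' $1+v_{i+1}(x,\cdot)-v_{i+1}(s,r)$, and both bound that gain by $1$ using the monotonicity of $v_{i+1}(\cdot,r)$ together with the domination $v_{i+1}(s,1)\le v_{i+1}(s,0)$. The only difference is cosmetic---you bound each integrand pointwise by $1$, while the paper first replaces each integral by (length)$\times$(endpoint value) and then cancels---and you are more explicit than the paper in actually justifying the domination inequality, which the paper invokes without comment.
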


\begin{proof}
The lower bound is trivial and it follows by the fact that $v_{i}(s,r)$ is strictly decreasing in $i$
for each $(s,r) \in [0,1] \times \{0,1\}$.

For the upper bound, we first assume that $r=0$. Then, subtracting $v_{i+1}(s,0)$ on both sides of equation
\eqref{eq:BellmanLUS} when $r=0$ and using the definition of
$a^*$ and $b^*$, we obtain
\begin{align*}
    v_{i}(s,0)-v_{i+1}(s,0) & =  - (b^*(i, s, r) - a^*(i, s, r))v_{i+1}(s,0)\\
                            & +\int_{a^*(i, s, r)}^{s} (1+v_{i+1}(x,1))\,dx + \int^{b^*(i, s, r)}_{s} (1+v_{i+1}(x,0))\,dx.
\end{align*}
Recalling the monotonicity property of $s \mapsto v_{i+1}(s,r)$, we then have
\begin{align*}
    v_{i}(s,0)-v_{i+1}(s,0) & \leq - (b^*(i, s, r) - a^*(i, s, r))v_{i+1}(s,0)\\
    & +(s - a^*(i, s, r))(1+ v_{i+1}(s,1)) + (b^*(i, s, r)-s)(1+ v_{i+1}(s,0)),
\end{align*}
and since $v_{i+1}(s,1)\leq v_{i+1}(s,0)$, we finally obtain
$$
    v_{i}(s,0)-v_{i+1}(s,0) \leq  b^*(i, s, r) - a^*(i, s, r) \leq 1,
$$
as \eqref{eq:BellmanBound2} requires. The proof for $r=1$ is very similar and it is therefore omitted.
\end{proof}

We now come to the main lemma of this section.

\begin{Lemma}\label{lm:ISmartingale}
The process defined by
\begin{equation*}
Y_i=U^o_i(\pi^*_n)+v_{i+1} (S_i, R_i) \quad \text{for all } 0\leq i\leq n,
\end{equation*}
is a martingale with respect to the natural filtration
$\{\FF_i\}_{0\leq i\leq n}$.
Moreover, for the martingale difference sequence $d_i = Y_i - Y_{i-1}$ one has that
\begin{equation*}
| d_i | = |~ Y_i - Y_{i-1} ~| \leq 1 \quad \hbox{ for all $1\leq i\leq n$.}
\end{equation*}
\end{Lemma}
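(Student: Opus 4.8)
The plan is to verify the martingale property by a direct one-step conditioning argument using the Bellman equation, and then to get the increment bound from Lemma~\ref{lm:Bellman-bounds} together with the obvious fact that at most one observation is selected per step. First I would compute $\E[Y_i - Y_{i-1} \mid \FF_{i-1}]$. Writing $Y_i - Y_{i-1} = \{U^o_i(\pi^*_n) - U^o_{i-1}(\pi^*_n)\} + \{v_{i+1}(S_i,R_i) - v_i(S_{i-1},R_{i-1})\}$, the first bracket is the indicator $\1(X_i \in \Delta^*_i(S_{i-1},R_{i-1}))$ by the representation \eqref{eq:selection-central}. Conditioning on $\FF_{i-1}$ fixes $(S_{i-1},R_{i-1}) = (s,r)$, and the only randomness left is that of the single uniform $X_i$. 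I would split the integral over $x\in[0,1]$ into the acceptance interval $[a^*(i,s,r),b^*(i,s,r)]$ (where a selection occurs, the indicator contributes $1$, and the state updates to $(x,r')$ with $r'$ as in \eqref{eq:cases-Ri}) and its complement (where nothing happens and the state stays $(s,r)$). Matching this up against the two branches of the Bellman equation \eqref{eq:BellmanLUS}, with $a^*,b^*$ chosen exactly as the indifference thresholds \eqref{eq:optimal-threshold-down}--\eqref{eq:optimal-threshold-up}, one sees that $\E[v_{i+1}(S_i,R_i) + \1(X_i\in\Delta^*_i) \mid \FF_{i-1}] = v_i(s,r)$, i.e. $\E[Y_i \mid \FF_{i-1}] = Y_{i-1}$. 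The boundary case $i=n$ works because of the terminal conditions $v_n(s,0)=1$, $v_n(s,1)=s$ and $v_{n+1}\equiv 0$, and one checks $Y_0 = v_1(S_0,R_0) = v_1(0,0)$ has the right (finite) expectation, so $\{Y_i\}$ is a genuine martingale.

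For the increment bound I would argue pathwise. On the event that $X_i$ is rejected, $S_i = S_{i-1}$, $R_i = R_{i-1}$, and $U^o_i = U^o_{i-1}$, so $d_i = v_{i+1}(S_{i-1},R_{i-1}) - v_i(S_{i-1},R_{i-1})$, and Lemma~\ref{lm:Bellman-bounds} gives $-1 \le d_i \le 0$. On the event that $X_i$ is accepted, $U^o_i - U^o_{i-1} = 1$ and $d_i = 1 + v_{i+1}(S_i,R_i) - v_i(S_{i-1},R_{i-1})$; here one needs that $v_{i+1}(S_i,R_i) - v_i(S_{i-1},R_{i-1}) \in [-1,0]$, which holds because at an acceptance point the indifference condition $v_i(S_{i-1},R_{i-1}) \ge 1 + v_{i+1}(S_i,R_i) \ge v_i(S_{i-1},R_{i-1}) - 1$ (using $v_{i+1} \le v_i$) is exactly what the thresholds \eqref{eq:optimal-threshold-down}--\eqref{eq:optimal-threshold-up} encode. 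Hence $0 \le d_i \le 1$ on that event, and in all cases $|d_i| \le 1$.

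The main obstacle I anticipate is the bookkeeping in the conditional-expectation computation: one must handle the two values of $r$ separately, and within $r=0$ keep careful track of which sub-interval of $[0,s]$ triggers the switch $R_i = 1$ (because $S_i < S_{i-1}$) versus which part of $[s,1]$ keeps $R_i = 0$, and verify that the $\max$'s inside \eqref{eq:BellmanLUS} collapse to the plain integrands of $v_{i+1}(x,1)+1$ and $v_{i+1}(x,0)+1$ precisely on the acceptance interval determined by $a^*,b^*$ — that is, that the optimal interval policy realizes the pointwise maxima in the Bellman equation. This is where the continuity and monotonicity of $s\mapsto v_i(s,0)$ and $s\mapsto v_i(s,1)$ are used, exactly as in the derivation of Lemma~\ref{lm:Bellman-bounds}. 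Once that identification is in hand, both claims follow without further difficulty.
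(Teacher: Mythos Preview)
Your martingale argument via direct one-step conditioning against the Bellman equation is correct, though the paper takes a shorter route: it simply observes that $v_{i+1}(S_i,R_i)=\E[U^o_n(\pi^*_n)-U^o_i(\pi^*_n)\mid\FF_i]$, so $Y_i=\E[U^o_n(\pi^*_n)\mid\FF_i]$ is a Doob martingale. Your approach works but requires exactly the careful $r=0$/$r=1$ bookkeeping you flag as the ``main obstacle''; the paper's formulation sidesteps all of it.

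For the increment bound there is a genuine error. On the acceptance event you assert $d_i\in[0,1]$, i.e.\ $v_{i+1}(S_i,R_i)-v_i(S_{i-1},R_{i-1})\in[-1,0]$, and you justify the lower end by the chain ``$v_i(S_{i-1},R_{i-1}) \ge 1 + v_{i+1}(S_i,R_i)$''. That inequality is false in general, and so is the conclusion $d_i\ge 0$: at an accepted $X_i$ near the boundary $a^*$ (or $b^*$) the indifference condition gives $1+v_{i+1}(X_i,R_i)\approx v_{i+1}(S_{i-1},R_{i-1})$, so $d_i\approx v_{i+1}(S_{i-1},R_{i-1})-v_i(S_{i-1},R_{i-1})<0$ strictly. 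What the thresholds actually encode is $1+v_{i+1}(X_i,R_i)\ge v_{i+1}(S_{i-1},R_{i-1})$, which combined with Lemma~\ref{lm:Bellman-bounds} only yields $d_i\ge -1$ on acceptance. The upper bound $d_i\le 1$ does hold, but it comes from the monotonicity of $s\mapsto v_{i+1}(s,r)$ (giving $v_{i+1}(X_i,R_i)\le v_{i+1}(S_{i-1},R_{i-1})$) together with $v_{i+1}\le v_i$, not from the indifference condition.

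The paper organizes this cleanly by writing $d_i=W_i+Z_i$ with $W_i=v_{i+1}(S_{i-1},R_{i-1})-v_i(S_{i-1},R_{i-1})$ (always present, in $[-1,0]$ by Lemma~\ref{lm:Bellman-bounds}) and $Z_i$ the additional ``acceptance term'' in $[0,1]$. This is your case split repackaged so that the two bounds decouple; with that decomposition no false claim about the sign of $d_i$ on acceptance is needed, and $|d_i|\le 1$ follows immediately.
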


\begin{proof}
We first note that $Y_i$ is $\FF_i$-measurable and bounded. Then, from the definition of
$v_i(s,r)$ we have that $v_i(S_{i-1}, R_{i-1}) = \E\left[U^o_n(\pi^*_n) - U^o_{i-1}(\pi^*_n)~|~\FF_{i-1}\right]$.
Thus,
$$
Y_i = U^o_i(\pi^*_n) + \E\left[U^o_n(\pi^*_n) - U^o_{i}(\pi^*_n)~|~\FF_{i}\right] = \E\left[U^o_n(\pi^*_n) ~|~\FF_{i}\right],
$$
which is clearly a martingale.

To see that the martingale differences are bounded let
$$
W_i = v_{i+1}(S_{i-1}, R_{i-1}) - v_i(S_{i-1}, R_{i-1})
$$
represents the change in $Y_i$ if we do not
select $X_i$, and let
$$
Z_i = (1+ v_{i+1}(X_i, \1(X_i < S_{i-1})) - v_{i+1}(S_{i-1}, R_{i-1}))\1(X _i\in \Delta^*_i(S_{i-1}, R_{i-1}))
$$
represents the change when we do select $X_i$.
We then have that
$$
d_i = W_i + Z_i,
$$
and by our Lemma \ref{lm:Bellman-bounds} we know that $ -1 \leq  W_i  \leq 0 $.
Moreover, the definition of the threshold functions $a^*$ and $b^*$
and the monotonicity property of $s \mapsto v_{i+1}(s,r)$ give us that $ 0 \leq  Z_i  \leq 1$,
so that  $| d_i | \leq 1$, as desired.
\end{proof}

\subsection*{Final Argument for the Variance Bound}

For the martingale differences
$d_i= Y_i - Y_{i-1}$
we have
$$
Y_n - Y_0 = \sum_{i=1}^n d_i, \quad \text{and} \quad  \Var[Y_n] = \E\left[\sum_{i=1}^n d_i^2\right],
$$
and we also have the initial representation
$$
Y_0 = U^o_0(\pi^*_n) + v_{1}(S_0, R_0)= v_1(0, 0) = \E[U^o_n(\pi^*_n)]
$$
and the terminal identity
$$
Y_n = U^o_n(\pi^*_n) + v_{n+1}(S_n, R_n)= U^o_n(\pi^*_n).
$$
We now recall the decomposition $d_i = W_i + Z_i$ introduced in the proof of Lemma \ref{lm:ISmartingale},
where
$$
W_i = v_{i+1}(S_{i-1}, R_{i-1}) - v_i(S_{i-1}, R_{i-1})
$$
and
$$
Z_i = (1+ v_{i+1}(X_i, \1(X_i < S_{i-1})) - v_{i+1}(S_{i-1}, R_{i-1}))\1(X _i\in \Delta^*_i(S_{i-1}, R_{i-1})).
$$
Since $W_i$ is $\FF_{i-1}$ measurable, we have
$$
 \E\left[d_i^2~|~\FF_{i-1}\right] = \E\left[Z_i^2~|~\FF_{i-1}\right] + 2\, W_i\, \E\left[Z_i~|~\FF_{i-1}\right] + W_i^2.
$$
We also have $0 = \E\left[d_i~|~\FF_{i-1}\right] = W_i + \E\left[Z_i~|~\FF_{i-1}\right]$ so
\begin{equation}\label{diandWdef}
\E\left[d_i^2~|~\FF_{i-1}\right] = \E\left[Z_i^2~|~\FF_{i-1}\right] - W_i^2.
\end{equation}
Finally, from the definition of $Z_i$, $a^*$ and $b^*$ we obtain
\begin{align*}
\E\left[Z_i^2~|~\FF_{i-1}\right] & = \int^{b^*(i, S_{i-1}, R_{i-1})}_{a^*(i, S_{i-1}, R_{i-1})}
                                          \!\!\!\!\! \left(1+v_{i+1}(x, \1(x < S_{i-1}))-v_{i+1}(S_{i-1}, R_{i-1})\right)^2\,dx \\
                                 & \leq b^*(i, S_{i-1}, R_{i-1}) - a^*(i, S_{i-1}, R_{i-1}),
\end{align*}
since the integrand is bounded by $1$.
Summing \eqref{diandWdef}, applying the last bound, and taking expectations gives us
$$
\Var[U^o_n(\pi^*_n)] \leq  \sum_{i=1}^n \E\left[ b^*(i, S_{i-1}, R_{i-1}) - a^*(i, S_{i-1}, R_{i-1}) \right] = \E[U^o_n(\pi^*_n)],
$$
where the last equality follows from our basic representation \eqref{eq:selection-central}.

\section{Intermezzo: Optimality and Uniqueness of Interval Policies}\label{se:Interval-policies-optimal}

The unimodal sequential selection problem is a finite horizon Markov decision problem
with bounded rewards and finite action space, and for such a problem it is known that
there exists a non-randomized Markov policy $\pi^*_n$
that is optimal \citeaffixed[Corollary 8.5.1]{BerShr:AP1978}{c.f.}.
This amounts to saying that there exists an  optimal strategy $\pi^*_n$ such that for each $i$, $S_{i-1}$ and $R_{i-1}$,
there is a Borel set $D^*_i(S_{i-1}, R_{i-1})\subseteq [0, 1]$ such that $X_i$ is accepted if
and only if $X_i \in D^*_i(S_{i-1}, R_{i-1})$. Here we just what to show that  the Borel sets
$D^*_i(S_{i-1}, R_{i-1})$ are actually intervals (up to null sets).


Given the optimal acceptance sets $  D^*_i(S_{i-1}, R_{i-1}) $, $1 \leq i \leq n$,
we now set
\begin{equation*}
v_i(S_{i-1}, R_{i-1}) = \E\left[ \sum_{k=i}^n \1 (X_k \in D^*_k(S_{k-1}, R_{k-1})) ~ \big| \FF_{i-1} \right],
\end{equation*}
so we have the recursion
\begin{equation}\label{eq:Optimal-Markov-Recursion}
v_i(S_{i-1}, R_{i-1}) = \E\left[ \1 (X_i \in D^*_i(S_{i-1}, R_{i-1})) + v_{i+1}(S_{i}, R_{i}) ~ \big| \FF_{i-1} \right],
\end{equation}
and $v_i(s, r)$ is just the optimal expected number of selections made from the
subsample $\{X_i, X_{i+1},\ldots, X_n\}$ given that  $S_{i-1} = s$ and $R_{i-1} = r$. We then note that $v_n(s,0) = 1 $
for all $s \in [0,1]$, and one can check by induction on $i$
that the map $s \mapsto v_i(s,0)$ is continuous and strictly decreasing in $s$ for $1 \leq i \leq n - 1$. A similar argument also gives that
the map $s \mapsto v_{i}(s,1)$ is continuous and strictly increasing in $s$ for all $1 \leq i \leq n$.

If we now set
\begin{align*}
a(i, S_{i-1}, R_{i-1}) &= \essinf D_i(S_{i-1}, R_{i-1}) \quad \text{and} \\
b(i, S_{i-1}, R_{i-1}) &= \esssup D_i(S_{i-1}, R_{i-1}),
\end{align*}
then we want to show for all $1\leq i \leq n$ and all $(S_{i-1}, R_{i-1})$ that we have
$$
\PP ( \{D_i(S_{i-1}, R_{i-1})^c \cap [a(i, S_{i-1}, R_{i-1}), b(i, S_{i-1}, R_{i-1}) ] \} ) = 0.
$$

To argue by contradiction, we suppose that there is an $1 \leq i \leq n$ and an acceptance set
$D_i^* \equiv D_i^*(S_{i-1}, R_{i-1})$
that is not equivalent to an interval; i.e. we suppose
\begin{equation}\label{eq:contradiction-assumption}
\PP ( \{  D_i^{*c} \cap [a^*(i, S_{i-1}, R_{i-1}), b^*(i, S_{i-1}, R_{i-1}) ] \} ) > 0.
\end{equation}
We then consider the sets
$$
L_i = [0, S_{i-1}] \cap D_i^*  \quad \text{and}  \quad
U_i = [S_{i-1}, 1] \cap D_i^*,
$$
and we introduce the intervals
$$
\widetilde L_i = [S_{i-1} - | L_i |, \, S_{i-1}] \quad  \text{and} \quad
\widetilde U_i = [S_{i-1}, \, S_{i-1} + | U_i |],
$$
where $|A|$ denotes the Lebesgue measure of a set $A$.
The set $\widetilde D_i = \widetilde L_i \cup \widetilde U_i$
is also an interval and $| \widetilde D_i | = | D_i^* |$, so, if we can show that
\begin{equation}\label{eq:interval-beats-markov}
\E [ \1 (X_i \in  D_i^*) + v_{i+1}(S_{i}, R_{i})  ]
< \E[ \1 (X_i \in \widetilde D_i) + v_{i+1}(S_{i}, R_{i})  ],
\end{equation}
then the representation \eqref{eq:Optimal-Markov-Recursion} tells us
that policy $\pi^*_n$ is not optimal, a contradiction.

To prove the bound \eqref{eq:interval-beats-markov}, we note that
\begin{align*}
\E & \left[ \1 (X_i \in \widetilde D_i) + v_{i+1}(S_{i}, R_{i}) ~ \big| \FF_{i-1} \right]
- \E\left[ \1 (X_i \in  D_i^*) + v_{i+1}(S_{i}, R_{i})  \big| \FF_{i-1} \right] \\
& = \E  \left[  v_{i+1}(X_{i}, R_{i}) \1 (X_i \in \widetilde D_i) ~ \big| \FF_{i-1} \right]
- \E\left[   v_{i+1}(X_{i}, R_{i})\1 (X_i \in  D_i^*)  \big| \FF_{i-1} \right]
\end{align*}
since $\widetilde D_i$ and $D_i^*$ are $\FF_{i-1}$-measurable and
$\E  [ \1 (X_i \in \widetilde D_i)  | \FF_{i-1} ] = \E  [ \1 (X_i \in  D_i^*)  | \FF_{i-1} ]$.
By our construction, we also have the identities
\begin{equation}\label{eq:first-addend}
\E  \left[  v_{i+1}(X_{i}, R_{i}) \1 (X_i \in \widetilde D_i) ~ \big| \FF_{i-1} \right]
= \int_{\widetilde L_i} v_{i+1}(x, 1) \, dx + \int_{\widetilde U_i} v_{i+1}(x, 0) \, dx,
\end{equation}
and
\begin{equation}\label{eq:second-addend}
\E  \left[  v_{i+1}(X_{i}, R_{i}) \1 (X_i \in  D_i^*) ~ \big| \FF_{i-1} \right]
= \int_{ L_i} v_{i+1}(x, 1) \, dx + \int_{ U_i} v_{i+1}(x, 0) \, dx.
\end{equation}
Now since $| L_i | = | \widetilde L_i |$ implies that
$| \widetilde L_i \cap L_i^c | = | L_i \cap \widetilde L_i^c |$, we can write
\begin{align}
\int_{\widetilde L_i} v_{i+1}(x, 1) \, dx  - \int_{ L_i} v_{i+1}(x, 1) \, dx
& = \int_{\widetilde L_i \cap L_i^c} \!\!\!\!\! v_{i+1}(x, 1) \, dx  - \int_{ L_i \cap \widetilde L_i^c} \!\!\!\!\! v_{i+1}(x, 1) \, dx  \nonumber\\
& = ( \beta_i - \alpha_i ) | \widetilde L_i \cap L_i^c |, \label{eq:first-difference}
\end{align}
where $\alpha_i= \alpha_i (S_{i-1}, R_{i-1})$, and $\beta_i=\beta_i(S_{i-1}, R_{i-1})$
are chosen according to the mean value theorem for integrals.
The sets $\widetilde L_i \cap L_i^c$ and $ L_i \cap \widetilde L_i^c $ are almost surely disjoint
since $\widetilde L_i \cap L_i^c \subset [ S_{i-1} - |L_i| , S_{i-1}]$
and $ L_i \cap \widetilde L_i^c \subset [0, S_{i-1} - |L_i| ]$.
So, we find that $\alpha_i < \beta_i$ since $v_{i+1}(x,1)$ is strictly decreasing in $x$.

A perfectly analogous argument tells us that we can write
\begin{equation}\label{eq:second-difference}
\int_{\widetilde U_i} v_{i+1}(x, 1) \, dx  - \int_{ U_i} v_{i+1}(x, 1) \, dx  = (\delta_i - \gamma_i) | \widetilde U_i \cap U_i^{c} |,
\end{equation}
where $\gamma_i < \delta_i$ and $\gamma_i$ and $\delta_i$ depend on $(S_{i-1}, R_{i-1})$.
If we now set  $$ c_i (S_{i-1}, R_{i-1}) = \min\{ \beta_i - \alpha_i , \delta_i - \gamma_i\},$$ then
the identities \eqref{eq:first-addend} and \eqref{eq:second-addend} and the differences
\eqref{eq:first-difference} and \eqref{eq:second-difference}
give us the bound
$$
c_i (S_{i-1}, R_{i-1})| \widetilde D_i \cap D_i^{*c} |
 \! \leq \!
 \E \!  \left[  v_{i+1}(X_{i}, R_{i}) \1 ( \! X_i \in \widetilde D_i \!) \! - \! v_{i+1}(X_{i}, R_{i})\1 ( \! X_i \in  D_i^* \!)  \big| \FF_{i-1} \right].
$$
Since $c_i (S_{i-1}, R_{i-1}) > 0$, the assumption \eqref{eq:contradiction-assumption}
implies that the left hand-side above is strictly positive. When we take
total expectation we get
$$
 0 < \E\left[  v_{i+1}(X_{i}, R_{i}) \1 (X_i \in \widetilde D_i) -  v_{i+1}(X_{i}, R_{i})\1 (X_i \in  D_i^*) \right].
$$
In view  of the recursion \eqref{eq:Optimal-Markov-Recursion}, this
contradicts the optimality of $\pi^*$. This completes the proof of \eqref{eq:interval-beats-markov}, and, in summary
we have the following proposition.

\begin{Proposition}\label{pr:interval-optimal}
If $\pi^*_n$ is an optimal non-randomized Markov policy for the unimodal sequential selection problem, then, up to sets of measure zero,
$\pi^*$ is an interval policy.
\end{Proposition}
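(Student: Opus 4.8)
The plan is to begin from the existence of an optimal non-randomized Markov policy $\pi^*_n$ with Borel acceptance sets $D^*_i(s,r)$ (the cited Bertsekas--Shreve result), and to show by a measure-preserving rearrangement and an exchange argument that each $D^*_i(s,r)$ must agree, up to a Lebesgue-null set, with the interval $[\essinf D^*_i(s,r),\, \esssup D^*_i(s,r)]$. The first step is to put in place the structural facts about the optimal value function $v_i(s,r)=\E[\sum_{k=i}^n \1(X_k\in D^*_k)\mid\FF_{i-1}]$: starting from $v_n(s,0)\equiv 1$ and $v_n(s,1)=s$ and running the backward recursion, one checks by induction on $i$ that $s\mapsto v_i(s,0)$ is continuous and strictly decreasing while $s\mapsto v_i(s,1)$ is continuous and strictly increasing. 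These strict monotonicities are exactly what will make the rearrangement strictly improving, so they must be established before the main argument.

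Next I would argue by contradiction. Suppose that for some $i$ and some realized pair $(S_{i-1},R_{i-1})$ the set $D^*_i\equiv D^*_i(S_{i-1},R_{i-1})$ is not equivalent to an interval, i.e. $\PP(D^{*c}_i\cap[\essinf D^*_i,\esssup D^*_i])>0$. Split $D^*_i$ at $S_{i-1}$ into a lower part $L_i=[0,S_{i-1}]\cap D^*_i$ and an upper part $U_i=[S_{i-1},1]\cap D^*_i$, and replace these by the two abutting intervals $\widetilde L_i=[S_{i-1}-|L_i|,S_{i-1}]$ and $\widetilde U_i=[S_{i-1},S_{i-1}+|U_i|]$, whose union $\widetilde D_i$ is an interval of the same total Lebesgue measure as $D^*_i$. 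Because $X_i$ is uniform on $[0,1]$ and independent of $\FF_{i-1}$, switching from $D^*_i$ to $\widetilde D_i$ does not change the acceptance probability, hence not the immediate reward $\E[\1(X_i\in\cdot)\mid\FF_{i-1}]$; the only quantity that can change is the expected continuation value $\E[v_{i+1}(X_i,R_i)\mid\FF_{i-1}]$. Since the new direction $R_i$ is forced --- a selection strictly below $S_{i-1}$ turns the run decreasing ($R_i=1$) and a selection at or above $S_{i-1}$ keeps it increasing ($R_i=0$), while if $R_{i-1}=1$ the acceptance set already lies in $[0,S_{i-1}]$ so only the lower part is in play --- this continuation term splits as $\int v_{i+1}(x,1)$ over the lower piece plus $\int v_{i+1}(x,0)$ over the upper piece.

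The heart of the argument is then the comparison of the corresponding integrals. For the lower piece, $|\widetilde L_i|=|L_i|$ forces $|\widetilde L_i\setminus L_i|=|L_i\setminus\widetilde L_i|$, and $\widetilde L_i\setminus L_i\subseteq[S_{i-1}-|L_i|,S_{i-1}]$ while $L_i\setminus\widetilde L_i\subseteq[0,S_{i-1}-|L_i|]$, so the rearrangement moves mass to the right; since $v_{i+1}(\cdot,1)$ is strictly increasing, the mean value theorem for integrals gives $\int_{\widetilde L_i}v_{i+1}(x,1)\,dx-\int_{L_i}v_{i+1}(x,1)\,dx=(\beta_i-\alpha_i)\,|\widetilde L_i\setminus L_i|$ with $\alpha_i<\beta_i$ whenever that symmetric difference has positive measure. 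Symmetrically, the upper-piece rearrangement moves mass to the left and $v_{i+1}(\cdot,0)$ is strictly decreasing, so it too is strictly improving. Summing and setting $c_i=\min\{\beta_i-\alpha_i,\,\delta_i-\gamma_i\}>0$ yields $c_i\,|\widetilde D_i\cap D^{*c}_i|\le\E[v_{i+1}(X_i,R_i)\1(X_i\in\widetilde D_i)-v_{i+1}(X_i,R_i)\1(X_i\in D^*_i)\mid\FF_{i-1}]$; the contradiction hypothesis makes the left side strictly positive, and taking total expectation and feeding the resulting strict inequality into the Bellman recursion $v_i=\E[\1(X_i\in D^*_i)+v_{i+1}(S_i,R_i)]$ contradicts optimality of $\pi^*_n$. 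The step I expect to be most delicate is the bookkeeping of the forced update of $R_i$ across the cases $R_{i-1}=0$ and $R_{i-1}=1$, together with confirming that the positive-measure gap in $D^*_i$ is inherited by at least one of the symmetric differences $\widetilde L_i\triangle L_i$ or $\widetilde U_i\triangle U_i$ so that the relevant $(\beta_i-\alpha_i)$ or $(\delta_i-\gamma_i)$ is genuinely nonzero; the value-function monotonicity, while essential, is a routine backward induction.
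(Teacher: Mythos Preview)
Your proposal is correct and follows essentially the same route as the paper: the same measure-preserving rearrangement of $D^*_i$ into the abutting intervals $\widetilde L_i\cup\widetilde U_i$, the same mean-value-theorem comparison of the continuation integrals exploiting the strict monotonicity of $v_{i+1}(\cdot,0)$ and $v_{i+1}(\cdot,1)$, and the same contradiction via the Bellman recursion. Your bookkeeping of the forced $R_i$-update and your attention to which symmetric difference inherits positive measure are exactly the details the paper leaves implicit, so there is nothing to add.
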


\begin{Corollary}\label{pr:interval-unique}
There is a unique policy $\pi^*_n \in \Pi(n)$ that is optimal.
\end{Corollary}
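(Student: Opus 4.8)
The plan is to prove something slightly sharper and more precise than the bare statement: any policy $\pi_n\in\Pi(n)$ that attains $\sup_{\pi_n\in\Pi(n)}\E[U^o_n(\pi_n)]$ must, with probability one, make exactly the acceptance decisions prescribed by the thresholds $a^*\in\AA$ and $b^*\in\BB$ of \eqref{eq:optimal-threshold-down} and \eqref{eq:optimal-threshold-up}. Two policies that agree $\PP$-almost surely along every trajectory are to be identified, and this is the ``natural sense'' in which $\pi^*_n$ is unique. The backbone of the argument is the verification principle for the finite-horizon Markov decision problem: with $v_i$ the value function determined by the Bellman equation \eqref{eq:BellmanLUS}, every $\pi_n\in\Pi(n)$ satisfies $\E[U^o_n(\pi_n)]\le v_1(0,0)$, and the difference $v_1(0,0)-\E[U^o_n(\pi_n)]$ equals the expected sum over $i$ of a nonnegative per-step loss $\Delta_i\ge 0$ incurred at the state $(i,S_{i-1},R_{i-1})$ by the (possibly randomized) action that $\pi_n$ takes when $X_i$ is revealed, measured against the Bellman-optimal action. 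Hence $\pi_n$ is optimal if and only if $\Delta_i=0$ almost surely for all $1\le i\le n$, and it suffices to identify the set of Bellman-optimal actions at each time and state.

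Next I would pin down that optimal-action set. Fix $i$ and a state $(s,r)$ reached at time $i$, and compare the conditional value of accepting $X_i=x$ and continuing optimally with that of rejecting. If $r=0$, accepting yields $1+v_{i+1}(x,1)$ for $x\in[0,s]$ and $1+v_{i+1}(x,0)$ for $x\in[s,1]$, while rejecting yields $v_{i+1}(s,0)$; by the strict monotonicity of $x\mapsto v_{i+1}(x,1)$ and $x\mapsto v_{i+1}(x,0)$ recorded in Section \ref{se:proof-variance-bound}, the gain from accepting is monotone in $x$ on each of $[0,s]$ and $[s,1]$, so the set of $x$ at which accepting strictly beats rejecting is an interval and the indifference set has at most two points, hence is Lebesgue-null; this interval is exactly $\Delta^*_i(s,0)=[a^*(i,s,0),b^*(i,s,0)]$ by the definitions \eqref{eq:optimal-threshold-down} and \eqref{eq:optimal-threshold-up}. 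If $r=1$, feasibility forces the accepted value to lie below $S_{i-1}$, so one compares $1+v_{i+1}(x,1)$ for $x\in[0,s]$ against $v_{i+1}(s,1)$; strict monotonicity of $v_{i+1}(\cdot,1)$ again makes the accept-preferred set an interval $[a^*(i,s,1),s]=\Delta^*_i(s,1)$ with a single indifference point. The boundary case $i=n$, where $v_{n+1}\equiv 0$, is immediate from the terminal conditions and is consistent with $a^*,b^*$. Since the $X_i$ are i.i.d.\ with a continuous distribution --- uniform on $[0,1]$ after the reduction of Section \ref{se:proof-asy-result} --- each of these exceptional null sets is hit with probability zero, so any action that differs from ``accept iff $X_i\in\Delta^*_i(s,r)$'' on a positive-probability set of $x$ contributes a strictly positive amount to $\E[\Delta_i]$.

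Putting the two steps together: an optimal $\pi_n$ has $\E[\Delta_i]=0$ for every $i$, hence at every $i$ its acceptance decision agrees $\PP$-almost surely with that of the interval policy built from $a^*$ and $b^*$, that is, with $\pi^*_n$; since this holds for all $i$, any two optimal policies make the same selections along $\PP$-almost every realization of $X_1,\ldots,X_n$. Combined with the existence of an optimal non-randomized Markov policy noted above and with Proposition \ref{pr:interval-optimal}, this is exactly the asserted uniqueness. I expect the only real obstacle to be bookkeeping rather than substance: making the verification identity and the phrase ``Bellman-optimal action'' precise for a \emph{general}, possibly history-dependent and randomized, policy in $\Pi(n)$, and carrying the ``up to $\PP$-null sets'' qualifier consistently. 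Once that is in place, the decisive input is simply that the accept/reject indifference boundary is a single point --- already a consequence of the strict monotonicity of the value function --- and is therefore invisible to the continuous law of the $X_i$.
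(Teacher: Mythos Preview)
Your argument is correct and rests on the same decisive fact the paper uses: the strict monotonicity of $s\mapsto v_{i+1}(s,0)$ and $s\mapsto v_{i+1}(s,1)$ makes the accept/reject indifference set a finite (hence Lebesgue-null) set, so the Bellman-optimal action is almost surely unique at every state. The paper's proof is terser --- it first invokes Proposition~\ref{pr:interval-optimal} to reduce any optimal policy to an interval policy, and then observes that strict monotonicity pins down the threshold values $a^*(\cdot)$ and $b^*(\cdot)$ uniquely via \eqref{eq:optimal-threshold-down} and \eqref{eq:optimal-threshold-up}. Your route through the verification identity (the supermartingale $U^o_i(\pi_n)+v_{i+1}(S_i,R_i)$ and the nonnegative per-step loss $\Delta_i$) is a bit more general: it handles arbitrary history-dependent, possibly randomized policies in one pass, so your concluding appeal to Proposition~\ref{pr:interval-optimal} is in fact unnecessary --- your argument already subsumes it. The trade-off is that the paper's two-line sketch leans on the structural work already done in Section~\ref{se:Interval-policies-optimal}, while your version is self-contained but needs the bookkeeping you flag (making the loss decomposition precise for general $\pi_n\in\Pi(n)$).
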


To prove the corollary one combines  the optimality of the interval policy given by Proposition
\ref{pr:interval-optimal} with the monotonicity properties of the Bellman equation \eqref{eq:BellmanLUS}.
Specifically, the map $s \mapsto v_{i} (s,0)$ is strictly decreasing in $s$ for all $1 \leq i \leq n-1$
and the map $s \mapsto v_{i} (s,1)$ is strictly increasing in $s$ for all $1 \leq i \leq n$,
so the equations
\eqref{eq:optimal-threshold-down} and \eqref{eq:optimal-threshold-up} determine the
values $a^*( \cdot)$ and $b^*( \cdot )$ uniquely.

\section{Generalizations and Specializations: $d$-Modal Subsequences}\label{se:extensions}

There are natural analogs of Theorems \ref{th:main} and \ref{tm:VarianceBound} for
``$d$-modal subsequences," by which we mean  subsequences that are allowed to make
``$d$-turns'' rather than just one. Equivalently these are subsequences that are the concatenation of (at most)
$d+1$ monotone subsequences. If we let $U_{n}^{o,d}(\pi^*_n)$ denote the analog of $U^o_n(\pi^*_n)$ when the selected subsequence
is $d$-modal, then the arguments of the preceding sections may be adapted to provide information on the
expected value of $U_{n}^{o,d}(\pi^*_n)$ and its variance. Here one should keep in mind that the case $d=0$ is \emph{not} excepted;
the arguments of the preceding sections do indeed apply to the selection of monotone subsequences.

\begin{Theorem}[Expected Length of Optimal $d$-Modal Subsequences]
If $\Pi(n)$ denotes the class of feasible policies for the $d$-modal
subsequence selection problem, then there is a unique $\pi^*_n \in \Pi(n)$ such that
\begin{equation*}
\E[U^{o,d}_n(\pi^*_n)] = \sup_{\pi_n \in \Pi(n)}\E[U^{o,d}_n(\pi_n)].
\end{equation*}
Moreover, for all  $n\geq 1$  and $d \geq 0$ one has
\begin{equation}\label{eq:mean-bounds-d-unimodal}
c(d)^{1/2}  n^{1/2} -  c(d)^{3/4} (\pi / 3)^{1/2} n^{1/4} - O(1) < \E[U^{o,d}_n(\pi^*_n)] < c(d)^{1/2}  n^{1/2},
\end{equation}
where $c(d) = 2(d + 1)$.
In particular, one has
\begin{equation*}
   \E[U^{o,d}_n(\pi^*_n)]  \sim \{2(d + 1)\}^{1/2}  n^{1/2} \quad \text{as $n\rightarrow \infty$}.
\end{equation*}
\end{Theorem}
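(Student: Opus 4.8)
The plan is to follow, \emph{mutatis mutandis}, the two halves of the proof of Theorem~\ref{th:main} together with the optimality and uniqueness discussion of Section~\ref{se:Interval-policies-optimal}. The state of the $d$-modal problem is the triple $(i, S_{i-1}, R_{i-1})$ where now $R_{i-1}\in\{0,1,\ldots,d\}$ records the number of ``turns'' made so far and, through its parity, the current direction of monotonicity (increasing when $R_{i-1}$ is even, decreasing when $R_{i-1}$ is odd); when $R_{i-1}=d$ no further turn is allowed, so one endpoint of the acceptance interval collapses to $S_{i-1}$. As in Section~\ref{se:Interval-policies-optimal}, this is a finite-horizon Markov decision problem with bounded rewards, so an optimal non-randomized Markov policy exists; the induction there carries over once one checks that $s\mapsto v_i(s,r)$ is strictly decreasing for even $r$ and strictly increasing for odd $r$, and the rearrangement argument that proves \eqref{eq:interval-beats-markov} then shows that the optimal acceptance set is an interval $[a^*(i,s,r), b^*(i,s,r)]$ with $a^*\le s\le b^*$. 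The strict monotonicity pins these thresholds down uniquely, which gives both the existence and the uniqueness of $\pi^*_n$, and the representation \eqref{eq:selection-central} becomes $U^{o,d}_n(\pi_n)=\sum_{i=1}^n\1(X_i\in\Delta_i(S_{i-1},R_{i-1}))$.

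For the upper bound I would reprove the Telescoping Bound (Lemma~\ref{lm:inequality-L2-norm}) with the bookkeeping function $g:[0,1]\times\{0,1,\ldots,d\}\to[0,d+1]$ given by $g(s,r)=r+s$ for even $r$ and $g(s,r)=(r+1)-s$ for odd $r$; this reduces to the $g$ of Lemma~\ref{lm:inequality-L2-norm} when $d=1$, and clearly $0\le g\le d+1$ with $g(S_n,R_n)<d+1$ almost surely. Conditioning on $\FF_{i-1}$ and integrating out the uniform $X_i$ gives, writing $b-a=b(i,S_{i-1},R_{i-1})-a(i,S_{i-1},R_{i-1})$, the identity
\[
\E[g(S_i,R_i)-g(S_{i-1},R_{i-1})\mid\FF_{i-1}] = \tfrac12(b-a)^2 + \rho_i,
\]
where the slack $\rho_i$ equals $(S_{i-1}-a)(2-S_{i-1}-b)\ge 0$ when $R_{i-1}$ is even and less than $d$, equals $(S_{i-1}+a)(b-S_{i-1})\ge 0$ when $R_{i-1}$ is odd and less than $d$, and vanishes when $R_{i-1}=d$; in every case $(b-a)^2\le 2\,\E[g(S_i,R_i)-g(S_{i-1},R_{i-1})\mid\FF_{i-1}]$, so telescoping yields $\sum_{i=1}^n\E[(b-a)^2]\le 2\,\E[g(S_n,R_n)]<2(d+1)=c(d)$. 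The two applications of Cauchy--Schwarz used for \eqref{ASinequality} then give $\E[U^{o,d}_n(\pi_n)]\le n^{1/2}\{\sum_{i=1}^n\E[(b-a)^2]\}^{1/2}<c(d)^{1/2}n^{1/2}$ for every interval policy, and hence, by the $d$-modal analog of Proposition~\ref{pr:interval-optimal}, for every policy.

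For the lower bound I would exhibit the obvious block policy $\widetilde\pi_n$: partition $\{1,\ldots,n\}$ into $d+1$ consecutive blocks of length $m=\lfloor n/(d+1)\rfloor$, discard the $O(1)$ leftover observations, and on block $j$ run a greedy monotone selection --- increasing if $j$ is odd, decreasing if $j$ is even --- with step $\delta_n=(c(d)/n)^{1/2}$, accepting $X_i$ iff it falls within $\delta_n$ of $S_{i-1}$ on the admissible side, resetting $S$ to the relevant endpoint ($1$ or $0$) at the start of each block. The resulting subsequence is $d$-modal, so by the suboptimality of $\widetilde\pi_n$, the symmetry of the $d+1$ blocks, and Wald's identity exactly as in Section~\ref{se:proof-asy-result},
\[
\E[U^{o,d}_n(\pi^*_n)]\ge\E[U^{o,d}_n(\widetilde\pi_n)] = (d+1)\,\delta_n\,\E[\nu],\qquad \nu=\min\{i:S_i>1-\delta_n \text{ or } i\ge m\}.
\]
To estimate $\E[\nu]$, introduce the i.i.d.\ sequence $Z_j$ that equals $0$ with probability $1-\delta_n$ and is uniform on $[0,\delta_n]$ with probability $\delta_n$, so that $\E Z_j=\delta_n^2/2$, $\Var[Z_j]<\delta_n^3/3$, and $|Z_j-\E Z_j|\le\delta_n$; with $\widetilde S_i=\sum_{j\le i}Z_j$ one has $S_i\stackrel{d}{=}\widetilde S_i$ for $i\le\nu$, hence $\E[\nu]=m-\sum_{i=0}^{m-1}\PP(\widetilde S_i>1-\delta_n)$. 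The leading term $(d+1)\delta_n m$ equals $\delta_n n - O(n^{-1/2}) = c(d)^{1/2}n^{1/2}-O(1)$, and the correction $(d+1)\delta_n\sum_{i=0}^{m-1}\PP(\widetilde S_i>1-\delta_n)$ is controlled just as in the derivation of \eqref{eq:tail-estimate}: peel off the last $O(n^{1/2})$ summands (each at most $1$, and contributing only $O(1)$ after multiplication by $(d+1)\delta_n$), apply Bernstein's inequality to the remainder using the bound on $\Var[Z_j]$, dominate the resulting sum by an integral, and bound $\int_0^{\alpha}e^{-u^2}\,du\le\pi^{1/2}/2$. Tracking the constants $c(d)=2(d+1)$ and $\delta_n=(c(d)/n)^{1/2}$ through this estimate produces a correction of $c(d)^{3/4}(\pi/3)^{1/2}n^{1/4}+O(1)$, so together with the upper bound we obtain \eqref{eq:mean-bounds-d-unimodal} and the stated asymptotic.

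The one genuinely new ingredient is the bookkeeping function $g$: it must be bounded by $c(d)/2=d+1$ and no larger (so that the telescoped estimate lands on the target constant $c(d)$), yet still yield a nonnegative slack $\rho_i$ in both parity classes and an exact equality at the terminal block $r=d$. Once $g$ is guessed, its verification is the same short integration as in the proof of Lemma~\ref{lm:inequality-L2-norm}, so I expect no real obstacle there; the only places that demand care are the bookkeeping of the explicit constants in the Bernstein step (matching $c(d)^{3/4}(\pi/3)^{1/2}$ exactly) and the check that the $O(1)$ observations lost to the divisibility of $n$ by $d+1$, as well as the parity adjustment already present in Theorem~\ref{th:main}, never cost more than the $O(1)$ term.
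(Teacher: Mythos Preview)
Your proposal is correct and is precisely the adaptation that the paper leaves to the reader in Section~\ref{se:extensions}: the extended bookkeeping function $g(s,r)=r+s$ (even $r$) or $(r+1)-s$ (odd $r$) is the right generalization of Lemma~\ref{lm:inequality-L2-norm}, your slack terms $\rho_i$ check out in all three cases, and the $(d+1)$-block suboptimal policy reduces each block to the paper's $d=1$ computation with effective sample size $2m=2n/(d+1)$, which indeed reproduces the constant $c(d)^{3/4}(\pi/3)^{1/2}$. There is nothing to add beyond noting that the paper itself gives no proof here, only the assertion that ``the arguments of the preceding sections may be adapted,'' so your write-up is more detailed than the original.
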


One should note that the case $d=0$ corresponds to the \emph{monotone
subsequence selection problem} studied by \citeasnoun{SamSte:AP1981}
and more recently by \citeasnoun{Gne:JAP1999}. The monotone selection problem is also equivalent to
certain bin packing problems studied by
\citeasnoun{BruRob:AAP1991} and \citeasnoun{RheTal:JAP1991}.

In the special case of $d=0$, our upper bound \eqref{eq:mean-bounds-d-unimodal} agrees with that of
\citeasnoun{BruRob:AAP1991} as well as with the result of \citeasnoun{Gne:JAP1999}.
Our lower bound \eqref{eq:mean-bounds-d-unimodal} on the mean for $d=0$ turns out to be slightly worse than that of
\possessivecite{RheTal:JAP1991} since our constant for the $n^{1/4}$ term is $2^{3/4}(\pi / 3)^{1/2} \sim 1.72$,
while theirs is $8^{1/4}\sim 1.68$.

\medskip

For the $d$-modal problem, one can also prove the a variance bound that generalizes Theorem \ref{tm:VarianceBound}
in a natural way.

\begin{Theorem}[Variance Bound for $d$-Modal Subsequences] \label{th:variance-bound-d-modal}
For the unique optimal policy $\pi^*_n \in \Pi(n)$ one has the bound
\begin{equation*}
  \Var[U^{o,d}_n(\pi^*_n)] \leq \E[U^{o,d}_n(\pi^*_n)].
\end{equation*}
\end{Theorem}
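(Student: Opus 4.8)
The plan is to carry over, essentially verbatim, the martingale argument of Section~\ref{se:proof-variance-bound}, after enlarging the state variable $R_i$ so that it takes values in $\{0,1,\ldots,d\}$ and records the number of turns made among the selections up to and including time~$i$; as before, $S_i$ is the value of the last selected observation. When $R_{i-1}=r$ the current monotone run is increasing if $r$ is even and decreasing if $r$ is odd, and from a state $(s,r)$ an observation $X_i$ lying on the monotone side of $s$ (above $s$ when $r$ is even, below $s$ when $r$ is odd) continues the current run and leaves $R_i=r$, whereas an observation on the other side of $s$ turns into the next run and sets $R_i=r+1$ --- a move available only when $r<d$, with the understanding that when $r=d$ one may only continue in the current direction. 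One then writes the finite-horizon Bellman equation for $v_i(s,r)$, the optimal expected number of selections from $\{X_i,\ldots,X_n\}$ given $S_{i-1}=s$ and $R_{i-1}=r$, in the style of \eqref{eq:BellmanLUS}, with the terminal conditions $v_n(s,r)=1$ for $0\le r<d$ and $v_n(s,d)$ equal to $1-s$ or $s$ according as $d$ is even or odd, together with $v_{n+1}\equiv 0$. A backward induction on $i$ then establishes simultaneously that each map $s\mapsto v_i(s,r)$ is continuous, that it is strictly monotone in $s$ --- decreasing when $r$ is even and increasing when $r$ is odd --- and that $v_i(s,r+1)\le v_i(s,r)$ for all $0\le r<d$; this last inequality merely reflects that an additional available turn can never hurt, and in fact one checks that every selection feasible from state $(s,r+1)$ is also feasible from state $(s,r)$. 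Granting these monotonicity facts, the optimal policy is once more an interval policy with indifference thresholds $a^*(i,s,r)$ and $b^*(i,s,r)$ defined exactly as in \eqref{eq:optimal-threshold-down} and \eqref{eq:optimal-threshold-up}, and its optimality and uniqueness follow by a word-for-word repetition of the argument of Section~\ref{se:Interval-policies-optimal}.

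The one genuinely new ingredient is the $d$-modal analog of Lemma~\ref{lm:Bellman-bounds}: for all $s\in[0,1]$, all $0\le r\le d$, and all $1\le i\le n$, one has
$$
0\le v_i(s,r)-v_{i+1}(s,r)\le b^*(i,s,r)-a^*(i,s,r)\le 1.
$$
Its proof copies that of \eqref{eq:BellmanBound2}: the lower bound is the monotonicity of $v_i(s,r)$ in $i$, and for the upper bound one subtracts $v_{i+1}(s,r)$ from the Bellman equation, uses the definitions of $a^*$ and $b^*$ to rewrite the right-hand side as $-(b^*-a^*)\,v_{i+1}(s,r)$ plus one or two integrals of expressions of the form $1+v_{i+1}(x,\cdot)$, bounds each integrand by replacing $x$ with $s$ via the monotonicity of $v_{i+1}(\cdot,r)$ and $v_{i+1}(\cdot,r+1)$ in the first argument, and finally uses $v_{i+1}(s,r+1)\le v_{i+1}(s,r)$ to discard the residual cross-term; since $a^*,b^*\in[0,1]$, the bound $b^*-a^*\le 1$ is immediate. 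The boundary case $r=d$ has only a single integral and no cross-term, so it is easier still. I expect the bookkeeping behind this threefold induction --- in particular the comparison $v_i(s,r+1)\le v_i(s,r)$, which must be verified for both parities of $r$ --- to be the only step requiring real care; everything else is routine transcription.

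With the lemma in hand, the remainder simply transcribes Section~\ref{se:proof-variance-bound}. One checks that $Y_i=U^{o,d}_i(\pi^*_n)+v_{i+1}(S_i,R_i)=\E[U^{o,d}_n(\pi^*_n)~|~\FF_i]$ is a martingale and decomposes its increments as $d_i=W_i+Z_i$, where $W_i=v_{i+1}(S_{i-1},R_{i-1})-v_i(S_{i-1},R_{i-1})$ is the change under rejection and $Z_i$ is the change under selection. The lemma gives $-1\le W_i\le 0$, while the definitions of $a^*$ and $b^*$ and the monotonicity of $v_{i+1}$ in its first argument, together with $v_{i+1}(s,r+1)\le v_{i+1}(s,r)$, give $0\le Z_i\le 1$, so $|d_i|\le 1$ exactly as in Lemma~\ref{lm:ISmartingale}. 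Since $W_i$ is $\FF_{i-1}$-measurable and $\E[d_i~|~\FF_{i-1}]=0$, one obtains $\E[d_i^2~|~\FF_{i-1}]=\E[Z_i^2~|~\FF_{i-1}]-W_i^2\le\E[Z_i^2~|~\FF_{i-1}]$ as in \eqref{diandWdef}, and writing $\E[Z_i^2~|~\FF_{i-1}]$ as an integral over $[a^*(i,S_{i-1},R_{i-1}),b^*(i,S_{i-1},R_{i-1})]$ of a quantity lying in $[0,1]$ shows this conditional second moment is at most $b^*(i,S_{i-1},R_{i-1})-a^*(i,S_{i-1},R_{i-1})$. Summing over $i$, taking expectations, using $\Var[U^{o,d}_n(\pi^*_n)]=\Var[Y_n]=\E[\sum_{i=1}^n d_i^2]$, and invoking the $d$-modal analog of the representation \eqref{eq:selection-central} then gives
$$
\Var[U^{o,d}_n(\pi^*_n)]\le\sum_{i=1}^n\E[\,b^*(i,S_{i-1},R_{i-1})-a^*(i,S_{i-1},R_{i-1})\,]=\E[U^{o,d}_n(\pi^*_n)],
$$
which is the assertion of Theorem~\ref{th:variance-bound-d-modal}.
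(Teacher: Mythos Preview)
Your proposal is correct and follows exactly the approach the paper intends: the paper does not give a separate proof of Theorem~\ref{th:variance-bound-d-modal} but simply asserts that ``the arguments of the preceding sections may be adapted,'' and your outline supplies precisely that adaptation. You have correctly identified the one structural fact that is new in passing from $d=1$ to general $d$ --- the comparison $v_{i+1}(s,r+1)\le v_{i+1}(s,r)$ --- and this is indeed what makes the analog of Lemma~\ref{lm:Bellman-bounds} and the bound $0\le Z_i\le 1$ go through; the backward induction you sketch for it is straightforward once one uses the inductive hypothesis termwise inside the Bellman recursion.
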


Chebyshev's inequality and
Theorem \ref{th:variance-bound-d-modal} now combine as usual to provide a weak law for $U^{o,d}_n(\pi^*_n)$.
Even for $d=0$ this variance bound is new.

\section{Two Conjectures}\label{se:final-connections}

Numerical studies for small $d$ and moderate $n$, support the conjecture that one has the asymptotic relation
\begin{equation}\label{eq:conjecture-variance}
\Var[U^{o,d}_n(\pi^*_n)] \sim \frac{1}{3} \E[U^{o,d}_n(\pi^*_n)] \quad \text{as } n \rightarrow \infty.
\end{equation}
As observed by an anonymous reader, the methods of Section \ref{se:proof-variance-bound} and the
concavity of the value function established in \citeasnoun{SamSte:AP1981} are in fact enough to prove an appropriate lower bound
\begin{equation}\label{d0var}
\frac{1}{3}\,\E[U^{o,d}_n(\pi^*_n)] - 2 < \Var[U^{o,d}_n (\pi^*_n)] \quad \text{where } d=0.
\end{equation}
Here one should now be able to prove an upper bound on $\Var[U^{o,d}_n (\pi^*_n)]$  that is strong enough to
establish the case $d=0$ of the conjecture \eqref{eq:conjecture-variance}, but confirmation of this has eluded us.

Also, by numerical calculations of the optimal policy $\pi_n^*$ and by subsequent simulations of $U^{o,d}_n(\pi^*_n)$ for $d=0$, $d=1$, and
modest values of $n$, it seems likely that the random variable $U^{o,d}_n(\pi^*_n)$
obeys a central limit theorem. Specifically, the natural conjecture is that for all $d\geq 0$ one has
\begin{equation}\label{CTLconj}
 \frac{\sqrt{3} \left(U^{o,d}_n(\pi^*_n) - \sqrt{ 2 (d+1) n } \right) }{  (2 (d+1) n  )^{1/4}}\, \, {\Longrightarrow}\, \,  N(0,1)
 \quad \text{as $n \rightarrow \infty$}.
\end{equation}
Implicit in this conjecture is the belief that the lower bound \eqref{eq:mean-bounds-d-unimodal} can be improved to
$\{ 2 (d+1) n \}^{1/2}- o(n^{1/4})$, or better.

So far, the only central limit theorem available for a sequential selection
problem is that obtained by \citename{BruDel:SPA2001} \citeyear{BruDel:SPA2001,BruDel:SPA2004}
for a Poissonized version of the monotone subsequence problem.
Given the sequential nature of the problem,
it appears to be difficult to de-Poissonize the  results of  \citeasnoun{BruDel:SPA2004}
to obtain conclusions about the distribution of $U^{o,d}_n(\pi^*_n)$  even for $d=0$.

For completeness, we should note that even for the \emph{off-line} unimodal subsequence problem, not much more
is known about the random variable $U_n$ than its asymptotic expected value \eqref{ProphetExpectedValues}.
Here one might hope to gain some information about the distribution of $U_n$ by the methods of
\citeasnoun{BolBri:AAP1992} and \citeasnoun{BolJan:CGP1997}, and it is even feasible --- but only remotely so --- that one could
extend the famous distributional results of \citeasnoun{BaiDeiKur:JAMS1999} to unimodal subsequences. More modestly,  one certainly should
be able to prove that the distribution of $U_n$ is \emph{not} asymptotically normal. One motivation for going after such a result would be
to underline how the restriction to sequential strategies can bring one back to the domain of the central limit theorem.

\subsection*{Acknowledgment:}
We are grateful to an insightful referee who outlined the proof of the bound \eqref{d0var} and who suggested the conjecture \eqref{eq:conjecture-variance}
for $d=0$.

\bibliographystyle{agsm}

\end{document}